\theoremstyle{thmstyleone}%
\newtheorem{theorem}{Theorem}[section]
\newtheorem{lemma}{Lemma}[section]
\newtheorem{proposition}{Proposition}[section]
\newtheorem{remark}{Remark}[section]%
\newtheorem{definition}{Definition}[section]%
\newtheorem{assumption}{Assumption}[section]
\newtheorem{problem}{Problem}
\begin{document}

\title[Stability for the Helmholtz equation in periodic structures]{Stability for the Helmholtz equation in deterministic and random periodic structures}


\author[1]{\fnm{Gang} \sur{Bao}}

\author[2]{\fnm{Yiwen} \sur{Lin}}

\author[1]{\fnm{Xiang} \sur{Xu}}

\affil[1]{\orgdiv{School of Mathematical Sciences}, \orgname{Zhejiang University}, \orgaddress{\city{Hangzhou}, \postcode{310027}, \country{China}}}

\affil[2]{\orgdiv{School of Mathematical Sciences and Institute of Natural Sciences}, \orgname{Shanghai Jiao Tong University}, \orgaddress{\city{Shanghai}, \postcode{200240}, \country{China}}}


\abstract{Stability results for the Helmholtz equations in both deterministic and random periodic structures are proved in this paper.
	Under the assumption of excluding resonances, by a variational method and Fourier analysis in the energy space,
	the stability estimate for the Helmholtz equation in a deterministic periodic structure is established.
	For the stochastic case, by introducing a variable transform, the variational formulation of the scattering problem in a random domain is reduced to that in a definite domain with random medium. Combining the stability result for the deteministic case with regularity and stochastic regularity of the scattering surface, Pettis measurability theorem and Bochner's Theorem further yield the stability result for the scattering problem by random periodic structures. Both stability estimates are explicit with respect to the wavenumber.}

\keywords{Helmholtz equation, periodic structure, high wavenumber, stability, estimate explicit on wavenumber}



\maketitle

\section{Introduction}
\label{S:1}

This paper is concerned with scattering of time-harmonic electromagnetic plane waves by periodic structures, which is known as gratings in optics.
The goal is to investigate stability properties of the Helmholtz equation in both deterministic and random periodic structures.

For the scattering by a periodic structure, considerable progress has been made mathematically in literatures.
Bao, Dobson and Cox \cite{Bao1995} reduced the scattering problem into a bounded domain problem by introducing a transparent boundary condition and proved that there exists a unique solution at all but a sequence of countable frequencies.
Lord and Mulholland \cite{Lord2013} used the variational formula  to derive a priori estimate  for periodic structures, which can be viewed as an extension of  \cite{Bao1996}.
More results on the Helmholtz and Maxwell equations in periodic structures can be found in \cite{Petit1980, Bao2001, Bao2021book}.

Recently, there is an increasing interest in the study of wavenumber-explicit bounds for scattering problems.
Chandler-Wilde and Monk \cite{Chandler2005} obtained a priori bounds explicit with the wavenumber for the scattering problem by unbounded rough surfaces.
Hetmaniuk \cite{Hetmaniuk2007} established stability estimates for the Helmholtz equation with mixed boundary conditions. Esterhazy and Melenk \cite{Esterhazy2012} further established an estimate for bounded Lipschitz domains with a Robin boundary condition.
The stability for the scattering problem by a large rectangular cavity was obtained by Bao, Yun and Zhou \cite{Bao2012} in transverse electric polarization and was improved and extended to transverse magnetic polarization in  \cite{Bao2016}.
Du, Li and Sun \cite{LiBuyang2015} presented a numerical study of the stability estimate of the scattering by a rectangular cavity.
Wavenumber-explicit stability on the scattering problem by an obstacle in homogeneous media case \cite{Spence2014} and heterogeneous media case \cite{Pembery2019} were obtained
under the assumption that the scatterer $D$ is a star-shaped domain satisfying nontrapping conditions.
However, for periodic structures, little is known about stability analysis with explicit dependence on the wavenumber, which will be derived in this paper.
As noted in \cite{Bao2012, Bao2016, Chandler2005}, both the geometry and the type of boundary condition strongly affect the wavenumber-dependent stability.
The transparent boundary conditions (TBC) of periodic structures  are different from those of open cavities and unbounded rough surfaces, which leads to additional difficulties on resonances.
To overcome the difficulties, a uniform distance $\varepsilon$ is introduced in this paper and a $k$-explicit stability for periodic structures is established by a combination of the variational method and Fourier analysis.

We also analyze the stability for scattering by random periodic structures in this paper.
Progress has been made recently in the exterior scattering problem by an obstacle with randomness\cite{Spence2014, Hiptmair2018} and scattering problems in random media \cite{Pembery2019, Pembery2020}.
However, no stability result is available for scattering by random periodic structures, which has many applications in diffractive optics \cite{Rico-Garcia2009}.
Numerical methods for the scattering by random periodic structures have been developed recently in \cite{Feng2018, BaoLinSINUM2020}.
In this work, we focus on the stability for gratings with uncertainty.
One difficulty is the lack of compactness. In fact, since both the deterministic and the stochastic Helmholtz equations are not coercive and the necessary compactness results are not valid any more in Bochner spaces, Fredholm theory cannot be used to the stochastic Helmholtz equation to compensate for the lack of coercivity as for the deterministic Helmholtz equation. Consequently, it is difficult to obtain the well-posedness for the random case directly as that for the deterministics case.
In this work, we employ Pettis measurability theorem and Bochner's theorem as in \cite{Pembery2020} to obtain the stability estimate explicit on the wavenumber for the random case based on the stability result for the deterministic case.
However, the randomness of the integral domain for the scattering by random periodic structures prevents a direct application of the general framework in  \cite{Pembery2020}, which leads to the other difficulty.
To overcome this difficulty, a variable transform is introduced here so that the transformed random variational form is defined on a deterministic domain with random coefficients. Similar transformation idea has also been used for other types of random differential equations \cite{Xiu2006}.
Therefore, for the stochastic case, by integrating the deterministic result over the probability space and introducing a transform to change the stochastic integral area into a definite one,  regularity and stochastic regularity of the scattering surface, Pettis measurability theorem and Bochner's theorem yield the well-posedness and a stability estimate of our model problem.

The rest of the paper is as follows. In Section \ref{sec:results}, the model problem is introduced, the variational formulation is described, and the results (i.e., Theorems \ref{thm:1}-\ref{thm:3}) are presented.
Theorem \ref{thm:1} concerns the stability for the Helmholtz equation in a deterministic periodic structure.
Theorem \ref{thm:3} gives the stability estimate for large wavenumber for the Helmholtz equation in a random periodic structure.
The following sections are devoted to the proofs of the two results.
Section \ref{sec:thm1} and Section \ref{sec:thm23} give the detailed proofs of Theorem \ref{thm:1} and Theorem \ref{thm:3} respectively, followed by
conclusion given in Section \ref{sec:conslusion}.

\section{Main results}
\label{sec:results}

\subsection{Model problem}
Consider a plane wave incident on a random periodic structure
$$S := \{x\in \mathbb{R}^2: x_2=f(\omega;x_1),\omega \in \Omega,  x_1 \in [0,\Lambda]\},$$
which is characterized by the wavenumber $k$ ruled on a perfect conductor.
Here, $\omega \in \Omega$ denotes the random sample in a complete probability space $(\Omega, \mathcal{F}, \mu)$,
$x=(x_1,x_2)\in \mathbb{R}^2$ are the spatial variables, and the random surface $f: \Omega \times \mathcal{X}   \rightarrow \mathbb{R}$ is a stationary Gaussian process, each of which is a Lipschitz function, i.e. $f\in L^2(\Omega;Lip)$, where $Lip$ is the space of all Lipschitz functions.
The medium and material are assumed to be invariant in the $x_3$ direction and  $\Lambda$-periodic in the $x_1$ direction.
There are two fundamental polarizations for the electromagnetic fields: transverse-electric (TE) and transverse-magnetic (TM) polarization. Here, we consider the random periodic perfectly conducting grating problem for TE polarization.

\begin{figure}[h]
	\centering
	\includegraphics[width=0.9\textwidth]{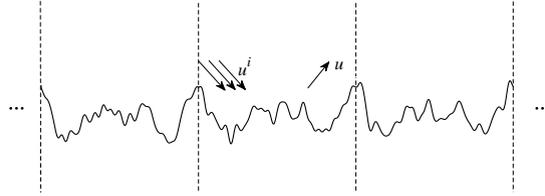}\\
	\caption{Problem geometry}\label{fig:geometry}
\end{figure}

As shown in Figure \ref{fig:geometry}, the grating is illuminated from above by a time-harmonic plane wave
$	u^{i}=e^{i\alpha x_1-i\beta x_2},$
where
$ \alpha = k \sin\theta,~\beta=k \cos\theta$,
$\theta\in(-\pi/2,\pi/2)$ is the incident angle with respect to the positive $x_2$-axis.
Denote $$D^+= \{x\in\mathbb{R}^2: x_2>f(\omega; x_1),\ \omega \in \Omega,  x_1 \in [0,\Lambda]\}.$$ Since the medium below $S$ is perfectly electric conducting, the scattering problem for TE polarization can be modeled by the following two-dimensional Helmholtz equation with the homogeneous boundary condition:
\begin{eqnarray}\label{eq:u0}
	\left\{\begin{array}{ll}
		\Delta u(\omega ; \cdot)+k^{2} u(\omega ; \cdot)=0 & \text { in } \Omega \times D^+, \\
		u(\omega;\cdot)=0 & \text { on } \Omega \times S ,
	\end{array}\right.
\end{eqnarray}
where $u(\omega ; \cdot)$ is the total field.

\subsection{Variational form}
\label{sec:variational}
In order to obtain a stability estimate, an important step is to reduce the infinite scattering problem into a bounded domain problem by introducing a transparent boundary condition \cite{Bao1995SINUM}.

Since the total field can be decomposed into
$
u(\omega;\cdot) = u^{i} + u^s(\omega;\cdot)
$
where
the incident wave satisfies
\begin{equation*}
	\Delta u^{i}+k^{2} u^{i}=0 \quad  \text { in } \Omega \times D^+,
\end{equation*}
the scattered field $u^s(\omega;\cdot)$ satisfies
\begin{eqnarray}\label{eq:u}
	\left\{\begin{array}{ll}
		\Delta u^s(\omega ; \cdot)+k^{2} u^s(\omega ; \cdot)=0 & \text { in } \Omega \times D^+, \\
		u^s(\omega ; \cdot)+u^{i}=0 & \text { on } \Omega \times S .
	\end{array}\right.
\end{eqnarray}

Denote by $\Gamma = \{ (x_1,x_2)\in\mathbb{R} ^2 : x_2=b, x_1 \in [0,\Lambda] \}  $
with
$b > \max_{\omega\in \Omega, x_1\in(0,\Lambda)}  f(\omega;x_1). $
The scattered field $u^s$ above $\Gamma$ admits the Rayleigh expansion
\begin{equation}\label{Rayleigh}
	u^s(\omega;x_1,x_2)=\sum_{n \in Z}  A_{n}  e^{i\left(\alpha_{n}  x_1+\beta_{n}  x_2\right)} ,
\end{equation}
where
\begin{equation}\label{alpha_n}
	\alpha_{n}=\alpha+\dfrac{2 \pi n}{\Lambda}, \beta_{n}^{2}=k^{2}-\alpha_{n}^{2} \text { with } \Im \beta_{n} \geq 0.
\end{equation}

\begin{assumption}\label{assump:k}
	(Exclusion of resonances). Assume there exists $\varepsilon>0$, such that $\vert k-\alpha_n \vert\geq \varepsilon, \ \forall n \in \mathbb{Z} $.
\end{assumption}

\begin{remark}
	A constant  $\varepsilon$ is introduced here to exclude	all possible resonances ($\kappa = \alpha_n$) uniformly and thus a stability explicit with the wavenumber $k$ is obtained by a combination of the variational method and Fourier analysis.
\end{remark}

Define a boundary operator $T:H^{\frac{1}{2}}(\Gamma) \rightarrow H^{-\frac{1}{2}}(\Gamma)$ by
\begin{equation*}
	(T v)\left(x_{1} \right)=\sum_{n \in \mathbb{Z} }  \mathrm{i}  \beta_{n}  v_{n}  e^{\mathrm{i}  \alpha_{n}  x_{1} } ,
\end{equation*}
where
$v_{n} (\omega)=\frac{1} {\Lambda}  \int_{0} ^{\Lambda}  v\left(\omega;x_{1} ,b\right) e^{-\mathrm{i}  \alpha_{n}  x_{1} }  \mathrm{d} x_{1} , $
which yields a transparent boundary condition on $\Gamma$:
\begin{equation*}
	\partial_{\nu} u = T u +g(x_1) \quad \text{ on } \Gamma,
\end{equation*}
and $g(x_1)= \partial_{\nu} u^{i} - T u^{i} = -2 i \beta e^{i \alpha x_1 - i \beta b}$.

Denote by $D= \{x\in\mathbb{R}^2: f(\omega; x_1)<x_2<b,\ \omega \in \Omega,  x_1 \in [0,\Lambda]\}$. The scattering problem \eqref{eq:u0} in an unbounded domain may be reduced to the following problem in the bounded domain:
\begin{equation}\label{eq:u_total}
	\left\{\begin{array}{ll}
		\Delta u(\omega;\cdot)+k^{2} u(\omega;\cdot)=0 & \text { in } \Omega \times D, \\
		u(\omega;\cdot)=0 & \text { on } \Omega \times S, \\
		\partial_{\nu}u(\omega;\cdot) = Tu(\omega;\cdot)+g(x) & \text{ on } \Omega\times \Gamma.
	\end{array}\right.
\end{equation}

Motivated by uniqueness, we seek for quasi-periodic solutions of $u(\omega;\cdot)$, that is, $u(\omega;x_1+\Lambda,x_2) = e^{i \alpha \Lambda} u(\omega;x_1,x_2)$.
Let $\tau$ denote a trace operator from $D$ or $S$ to $\Gamma$.
Define the function spaces inside $D$ by
\begin{equation*}
	\begin{aligned}
		H^1(D)& = \{ v\in L^2(D): \nabla v \in L^2(D)^2 \},\\
		H^1_S(D) &= \{v\in H^1(D): \tau v=0 \text{ on } S\}, \\
		H^1_{S,qp}(D)&=\{v\in H^1_S(D): v(0,y) = v(\Lambda,y)e^{-i\alpha \Lambda}\}.
	\end{aligned}
\end{equation*}

Let $c:\Omega \rightarrow \mathcal{C}$ be defined by
\begin{equation}\label{def:c}
	c(\omega)=f(\omega),
\end{equation}
where $\mathcal{C}:= Lip$ is the space of all Lipschitz functions.

\textit{Variational form.}
Define the sesquilinear forms $\mathcal{A}$ on $L^2(\Omega;H^1_{S,qp}(D)) \times L^2(\Omega;H^1_{S,qp}(D))$ and  $\tilde{a}_{c(\omega)}$   on $H^1_{S,qp}(D) \times H^1_{S,qp}(D)$ by
\begin{equation}\label{def:AA}
	\mathcal{A}(u,v) := \int_{\Omega}\tilde{a}_{c(\omega)} (u,v) \mathrm{d} \mathbb{P}(\omega),
\end{equation}
and
\begin{equation}\label{aw}
	\tilde{a}_{c(\omega)} (u,v):= \int_D \nabla u \cdot \nabla \bar{v} - k^2 \int_D u \bar{v} - \int_\Gamma T \tau u \overline{ \tau v}.
\end{equation}
Define the antilinear functionals $\mathcal{G}$ on $L^2(\Omega;H^1_{S,qp}(D))$ and  $\tilde{G}_{c(\omega)}$  on  $H^1_{S,qp}(D)$ by
\begin{equation}\label{def:GG}
	\mathcal{G}(v) := \int_{\Omega} \tilde{G}_{c(\omega)}(v) \mathrm{d} \mathbb{P}(\omega),
\end{equation}
and
\begin{equation}\label{Gw}
	\tilde{G}_{c(\omega)}(v):= \int_\Gamma g \overline{ \tau v}.
\end{equation}
Consider the following variational form of the Helmholtz equation in a random periodic structure:
\begin{problem}\label{P3}
	Find $u\in L^2(\Omega;H^1_{S,qp}(D))$ such that
	\begin{equation}\label{variational}
		\mathcal{A} (u,v) = \mathcal{G} (v), \  \forall   v \in L^2(\Omega;H^1_{S,qp} (D)).
	\end{equation}
\end{problem}

\subsection{Stability results}

We first present a stability result explicit with respect to the wavenumber $k$ for the scattering problem by a deterministic periodic structure is given in Theorem \ref{thm:1}.

\begin{theorem}\label{thm:1}
	(Stability for the deterministic case). 	Fix one sample $\omega_0$. Let $\tilde{u}=u(\omega_0) \in H^1_{S,qp}(D)$ be the solution of the deterministic scattering problem. Under Assumption \ref{assump:k}, there exists a constant $C$ independent of  $b>1$ and large $k$, such that
	\begin{equation}\label{Destimate}
		\|\nabla \tilde{u} \|_{L^2(D)} + k \|\tilde{u}\|_{L^2(D)} \leq C \max \left\{ \dfrac{ b^2 {k} ^{2} } {\sqrt{\varepsilon} } ,b^3 {k} ^{\frac{5} {2} }   \right\}  \|g\|_{L^2(\Gamma)}.
	\end{equation}
\end{theorem}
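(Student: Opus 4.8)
The plan is to test the deterministic variational formulation with the solution $\tilde u$ itself and with suitable auxiliary functions, and then extract the energy bound by Fourier analysis on the artificial boundary $\Gamma$, carefully tracking the dependence on $k$, $b$ and $\varepsilon$. First I would take $v=\tilde u$ in the identity $\tilde a_{c(\omega_0)}(\tilde u,\tilde u)=\tilde G_{c(\omega_0)}(\tilde u)$, so that
\begin{equation*}
	\|\nabla\tilde u\|_{L^2(D)}^2-k^2\|\tilde u\|_{L^2(D)}^2-\int_\Gamma T\tau\tilde u\,\overline{\tau\tilde u}=\int_\Gamma g\,\overline{\tau\tilde u}.
\end{equation*}
Expanding $\tau\tilde u$ in the quasi-periodic Fourier series $\tau\tilde u=\sum_n \hat u_n e^{i\alpha_n x_1}$, the boundary term becomes $-\sum_n i\beta_n|\hat u_n|^2$; splitting the sum into the finitely many propagating modes ($\beta_n$ real, $|\alpha_n|<k$) and the evanescent modes ($\beta_n=i|\beta_n|$, $|\alpha_n|>k$) and taking real and imaginary parts separates the contributions. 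The imaginary part controls $\sum_{\text{prop}}\beta_n|\hat u_n|^2$ in terms of $\|g\|_{L^2(\Gamma)}\|\tau\tilde u\|_{L^2(\Gamma)}$, while the real part gives $\|\nabla\tilde u\|^2-k^2\|\tilde u\|^2+\sum_{\text{evan}}|\beta_n||\hat u_n|^2$ in the same terms; Assumption \ref{assump:k} enters here, since $|\beta_n|^2=|\alpha_n^2-k^2|=|\alpha_n-k||\alpha_n+k|\gtrsim\varepsilon k$ for evanescent modes near the cutoff, which is exactly what prevents the evanescent-mode denominators from blowing up.

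The difficulty, as the introduction flags, is that the sesquilinear form is not coercive — the $-k^2\|\tilde u\|^2$ term has the wrong sign — so the test $v=\tilde u$ alone does not close. To handle this I would use a Rellich-type identity: test the equation against $v=(x_2-b)\partial_{x_2}\tilde u$ (or $v=x_2\partial_{x_2}\tilde u$), which is admissible because $\tilde u$ vanishes on $S$ and is quasi-periodic in $x_1$, and integrate by parts. This produces, on one side, a positive multiple of $k^2\|\tilde u\|_{L^2(D)}^2$ plus controllable gradient terms, and on the other side boundary integrals over $\Gamma$ and over $S$. The $S$-integral is nonnegative or can be discarded/bounded using $\tilde u|_S=0$ (only the normal derivative survives, with a favorable sign from the geometry), and the $\Gamma$-integrals again reduce by Fourier expansion to weighted sums of $|\hat u_n|^2$ and $|\alpha_n\hat u_n|^2$ — together with $g$ and its derivative on $\Gamma$. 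Combining the Rellich identity with the energy identity lets me absorb the indefinite term and arrive at
\begin{equation*}
	\|\nabla\tilde u\|_{L^2(D)}^2+k^2\|\tilde u\|_{L^2(D)}^2\lesssim b\,\big(\text{boundary terms in }\hat u_n\big)+b\,\|g\|_{L^2(\Gamma)}\,\|\tilde u\|_{*},
\end{equation*}
where the $b$ prefactor comes from the weight $x_2-b$ (bounded by $b$ on $D$) and the boundary terms are the same propagating/evanescent sums already estimated above.

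The main obstacle — and the step I expect to be most delicate — is closing the resulting system of inequalities with the \emph{correct} powers of $k$, $b$ and $\varepsilon$, since the boundary sums $\sum\beta_n|\hat u_n|^2$ and $\sum|\alpha_n\hat u_n|^2$ must be bounded back in terms of $\|g\|_{L^2(\Gamma)}$ and the interior energy, and naive Cauchy–Schwarz loses too much. I would use a trace/interpolation inequality of the form $\|\tau\tilde u\|_{L^2(\Gamma)}^2\lesssim \eta\|\nabla\tilde u\|_{L^2(D)}^2+(\eta^{-1}+b^{-1})\|\tilde u\|_{L^2(D)}^2$ with $\eta$ chosen proportional to a negative power of $k$, feed the propagating-mode bound (which carries a $1/\sqrt{\beta_n}\le 1/\sqrt\varepsilon$ loss near the cutoff) and the evanescent-mode bound (controlled by $\varepsilon k$ from Assumption \ref{assump:k}) into the combined inequality, and then balance the two competing regimes. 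The regime where $k/\sqrt\varepsilon$ dominates and the regime where the $b^3k^{5/2}$ term dominates correspond precisely to which of these loss mechanisms is worst, and taking the maximum over the two gives the stated bound $C\max\{b^2k^2/\sqrt\varepsilon,\ b^3k^{5/2}\}\|g\|_{L^2(\Gamma)}$. Finally, using $\|\nabla\tilde u\|+k\|\tilde u\|\le\sqrt2(\|\nabla\tilde u\|^2+k^2\|\tilde u\|^2)^{1/2}$ converts the squared estimate into the form claimed.
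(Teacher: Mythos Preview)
Your proposal takes a genuinely different route from the paper. The paper does \emph{not} use a Rellich/Morawetz multiplier. Instead it writes the solution explicitly as a Fourier series in $x_1$ (via a single-layer representation), introduces two custom weighted norms $\|\cdot\|_A$ and $\|\cdot\|_B$ on the boundary Fourier coefficients (behaving like $H^{-1/2}$ and $H^{1/2}$ with weights $(\sqrt{|k^2-n^2|}+1/b)^{\mp 1}$), and proceeds in two lemmas: first, a mode-by-mode computation bounding $\|\nabla u\|_{L^2(D)}+k\|u\|_{L^2(D)}$ by $bk(\|\partial_\nu u\|_A+\|u\|_B)$; second, a three-case analysis (splitting on whether $\|g\|_A\lessgtr\gamma\|\partial_\nu u\|_A$ and $\delta\|\mathcal P_{\mathbf L}\|_A\lessgtr\|\mathcal V_{\mathbf L}\|_B$) to bound $\|\partial_\nu u\|_A+\|u\|_B$ by $\|g\|_A$, with the auxiliary parameters chosen as $\delta\sim 1/(bk)$ and $\gamma\sim\sqrt\delta$. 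The specific powers $b^2k^2/\sqrt\varepsilon$ and $b^3k^{5/2}$ emerge from these choices together with the inequality $\sqrt{|k^2-\alpha_n^2|}\ge\sqrt{\varepsilon k}$ from Assumption~\ref{assump:k}.

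Your Rellich strategy is natural and is what one does for rough-surface or cavity problems; the $S$-boundary term does indeed carry a good sign for a graph surface. But there is a real gap at exactly the place you flag as ``most delicate'': you do not show how the multiplier identity, combined with the trace inequality you sketch, actually reproduces the stated constants. The $\varepsilon$-dependence and the appearance of \emph{two} competing regimes in the paper come from the interaction of the nonlocal operator $T$ with near-cutoff modes, and in the paper this is handled by comparing low- and high-frequency pieces of $u$ and $\partial_\nu u$ in the $A$/$B$ norms and choosing $\gamma,\delta$ to balance them. A single Rellich identity plus a generic trace bound will give \emph{some} $k$-explicit estimate, but you would still need a Fourier-side argument on $\Gamma$---essentially the content of the paper's second lemma---to extract the precise $\max\{b^2k^2/\sqrt\varepsilon,\,b^3k^{5/2}\}$. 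As written, your sketch does not supply that step.
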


\begin{remark} \label{re:1}
	For the estimate, $\varepsilon$ is independent of the wavenumber $k$ and the measured height $b$. Note that for small 	$\varepsilon$, wavenumber is close to resonances and then the corresponding bound is very large.
	If $\varepsilon\geq \frac{1} { b^2 k } $, then the estimate becomes
	\begin{equation}\label{reduced}
		\|\nabla  \tilde{u}  \|_{L^2(D)}  + k \| \tilde{u} \|_{L^2(D)}  \leq C b^3 k ^{\frac{5} {2} }     \|g\|_{L^2(\Gamma)}.
	\end{equation}
\end{remark}

Next, the first stability result in a random periodic structure is shown in Theorem \ref{thm:3} under Assumption \ref{assump:k} and Assumption \ref{cond:f}.

\begin{assumption}\label{cond:f}
	(Regularity and stochastic regularity of $f$). Assume the random structure  $ f\in L^2(\Omega;Lip)$.
\end{assumption}

\begin{remark}\label{remark:f0}
	(Random surfaces satisfying Assumption \ref{cond:f}). Motivated by the uncertainty quantification such as Karhunen-Lo\`eve expansion and other similar expansions,  it makes sense to consider $f$ as series expansions around the known deterministic surface $f_0$ satisfying the Lipschitz condition.
\end{remark}

\begin{theorem}\label{thm:3}
	(Stability for the random case).
	Under Assumption \ref{assump:k} and Assumption \ref{cond:f},
	there exists a unique solution $u\in L^2(\Omega; H_{S,qp}^1(D))$ to Problem \ref{P3}.
	Moreover, there exists a constant $C$ independent of  $b>1$ and large $k$,  such that
	\begin{equation}\label{Sestimate}
		\|\nabla u \|_{L^2(\Omega;L^2(D))} + k \|u\|_{L^2(\Omega;L^2(D))} \leq C \max \left\{ \dfrac{b^2 {k} ^{2} } {\sqrt{\varepsilon} }
		,b^3 {k} ^{\frac{5} {2} }   \right\}   \|g\|_{L^2(\Omega;L^2(\Gamma))}.
	\end{equation}
\end{theorem}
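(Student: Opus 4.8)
The plan is to lift the deterministic estimate of Theorem~\ref{thm:1} to the Bochner-space setting of Problem~\ref{P3} by the Pettis/Bochner machinery, after first removing the $\omega$-dependence of the integration domain through a change of variables. For each sample $\omega$ I would introduce a bi-Lipschitz map $\Phi_\omega$ sending a fixed reference rectangle $\hat D=(0,\Lambda)\times(0,b)$ onto $D(\omega)=\{(x_1,x_2):f(\omega;x_1)<x_2<b\}$, acting as the identity in the $x_1$-variable and on $\Gamma$ (so that quasi-periodicity, the transparent-boundary term $\int_\Gamma T\tau u\,\overline{\tau v}$, and the datum $g$ are left untouched) and mapping the bottom edge of $\hat D$ onto $S(\omega)$; the vertical affine interpolation between $S(\omega)$ and $\Gamma$ is a convenient choice. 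Pulling \eqref{eq:u_total} back through $\Phi_\omega$ turns the Helmholtz operator into a divergence-form operator whose coefficients $A_{c(\omega)},J_{c(\omega)}$ are bounded measurable functions of $\nabla f(\omega)$, hence of $c(\omega)=f(\omega)$ --- this being the precise sense in which the forms $\tilde a_{c(\omega)},\tilde G_{c(\omega)}$ and the space $L^2(\Omega;H^1_{S,qp}(D))$ of Section~\ref{sec:variational} are to be read, now realised on the \emph{fixed} space $H^1_{S,qp}(\hat D)$. For $\mu$-a.e.\ $\omega$ the transformed problem is, via $\Phi_\omega$, equivalent to the deterministic scattering problem in $D(\omega)$, which under Assumption~\ref{assump:k} is uniquely solvable and whose solution satisfies the estimate of Theorem~\ref{thm:1}; the transported field $\hat u(\omega):=u(\omega)\circ\Phi_\omega$ therefore satisfies, up to a norm-distortion factor bounded by $C(1+\|f(\omega)\|_{Lip})$,
\[
\|\hat u(\omega)\|_{H^1(\hat D)}\;\le\;C\,\big(1+\|f(\omega)\|_{Lip}\big)\,M_k\,\|g\|_{L^2(\Gamma)},\qquad M_k:=\max\Big\{\tfrac{b^2k^2}{\sqrt\varepsilon},\,b^3k^{5/2}\Big\}.
\]

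Next I would establish that $\omega\mapsto\hat u(\omega)$ is a strongly measurable map $\Omega\to H^1_{S,qp}(\hat D)$. As that space is separable, Pettis' measurability theorem reduces this to weak measurability, which follows by writing $\hat u(\omega)=\mathcal S(c(\omega))$ with $\omega\mapsto c(\omega)=f(\omega)$ measurable (Assumption~\ref{cond:f}) and the parameter-to-solution map $\mathcal S:\mathcal C\to H^1_{S,qp}(\hat D)$, $c\mapsto\hat u$, continuous. Continuity of $\mathcal S$ I would prove by a perturbation argument: the coefficients $A_c,J_c$ depend continuously in $L^\infty$ on $c\in\mathcal C=Lip$, each transformed problem is uniquely solvable (being $\Phi$-equivalent to the deterministic Helmholtz problem, hence possessing a positive inf--sup constant by Theorem~\ref{thm:1} and Fredholm theory), and by the displayed bound the solutions remain bounded in $H^1_{S,qp}(\hat D)$ over $Lip$-bounded sets; these three facts close the estimate $\|\mathcal S(c_n)-\mathcal S(c)\|_{H^1(\hat D)}\to0$ whenever $c_n\to c$ in $Lip$.

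Squaring the displayed bound and integrating over the probability space then gives
\[
\int_\Omega\|\hat u(\omega)\|_{H^1(\hat D)}^2\,\mathrm d\mu(\omega)\;\le\;C\,M_k^2\Big(\int_\Omega\big(1+\|f(\omega)\|_{Lip}\big)^2\,\mathrm d\mu(\omega)\Big)\,\|g\|_{L^2(\Gamma)}^2,
\]
and the right-hand side is finite precisely because $f\in L^2(\Omega;Lip)$ (Assumption~\ref{cond:f}); together with strong measurability, Bochner's theorem yields $\hat u\in L^2(\Omega;H^1_{S,qp}(\hat D))$, and transporting back through $\Phi_\omega$ places $u$ in $L^2(\Omega;H^1_{S,qp}(D))$. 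That $u$ solves Problem~\ref{P3} follows by integrating the a.e.-$\omega$ identity $\tilde a_{c(\omega)}(u(\omega),v(\omega))=\tilde G_{c(\omega)}(v(\omega))$ against an arbitrary $v\in L^2(\Omega;H^1_{S,qp}(D))$ (Fubini); uniqueness follows by testing $\mathcal A(w,\cdot)=0$, $w$ being a difference of two solutions, against test functions localised in $\omega$ --- indicators of measurable $E\subseteq\Omega$ times deterministic reference functions running over a countable dense set --- which forces $w(\omega)$ to solve the homogeneous deterministic problem for a.e.\ $\omega$, hence $w=0$ under Assumption~\ref{assump:k}. Finally, \eqref{Sestimate} follows by integrating the squared bound of Theorem~\ref{thm:1} for $u(\omega)$ over the probability space $\Omega$, taking square roots, and using $a+b\le\sqrt2\,(a^2+b^2)^{1/2}$ with $a=\|\nabla u\|_{L^2(\Omega;L^2(D))}$, $b=k\|u\|_{L^2(\Omega;L^2(D))}$.

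I expect the main obstacle to be the measurability step, specifically the continuity of the transformed solution map $\mathcal S$: since the transformed form is indefinite, Lax--Milgram is unavailable, so one must combine Theorem~\ref{thm:1}'s unique solvability and wavenumber-explicit bound --- transported through $\Phi_\omega$, which requires some care in identifying the pulled-back forms and data with those of Section~\ref{sec:variational} --- with a perturbation estimate in the coefficients. A companion point to watch is that every surface-dependent constant produced by $\Phi_\omega$ must be controlled (it turns out, linearly, since the natural norm on $Lip$ dominates the sup-norm) by $\|f(\omega)\|_{Lip}$, so that the stochastic regularity $f\in L^2(\Omega;Lip)$ of Assumption~\ref{cond:f} is exactly enough for Bochner integrability; faster growth would force a stronger moment hypothesis on $f$. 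The remaining ingredients --- the change of variables itself, the Fubini arguments, and the passage between reference and physical norms --- are routine once these two points are secured.
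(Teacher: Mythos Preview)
Your proposal is correct and follows essentially the same route as the paper: a change of variables to remove the $\omega$-dependence of the domain, continuity of the parameter-to-solution map to obtain strong measurability via Pettis, Bochner's theorem together with the deterministic bound of Theorem~\ref{thm:1} for $L^2(\Omega)$-integrability, and integration of the pointwise variational identity for existence and uniqueness. The differences are cosmetic --- the paper maps $D_{f_0}\to D_{f(\omega)}$ for a fixed sample $f_0$ rather than to a reference rectangle, cites Kirsch's continuous-dependence result and Kato's perturbation theory for the continuity step, and phrases uniqueness via the abstract ``$\ker(\tilde a_{c(\omega)})=\{0\}$ a.s.'' statement of the Pembery--Spence framework rather than your indicator-function test; your explicit tracking of the $(1+\|f(\omega)\|_{Lip})$ distortion factor, and its control by Assumption~\ref{cond:f}, is if anything more transparent than the paper's $C'=\sup_\Omega C$.
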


\section{Stability for the deterministic case}
\label{sec:thm1}

This section is devoted to the proof of Theorem \ref{thm:1}. Our approach based on the variational method and Fourier analysis consists of two steps: the first step is to estimate the norms of $u$ in the domain $D$ by the norms of $u$ on the boundary $\Gamma$ and the second step is to estimate the norms of $u$ on $\Gamma$ by the norm of $g$.

Although our approach is related to \cite{Bao2012,Bao2016} for the scattering by a large rectangular cavity, additional difficulties arise: (1) Because of the different model geometry, quasiperiodic solutions have to be considered for our model here. (2) Instead of the Dirichlet or Neumann  boundary condition for the cavity problem, the (nonlocal) transparent boundary conditions must be dealt with. (3) Unlike the cavity problem, our model problem may not have unique solutions due to the resonances. Therefore, new techniques must be developed to prove the stability result.

Before proceeding to the detailed proof,  some useful notations and preliminary results are introduced first.

\subsection{Notations and preliminary results}
\label{S:preliminary}

Let $\tilde{u}(x_1,x_2)=u(\omega_0;x_1,x_2)$ be the solution to the deterministic problem for a given sample $\omega_0$. Then the total field $\tilde{u}$ satisfies
\begin{equation}\label{eq:u_deter}
	\left\{\begin{array}{ll}
		\Delta \tilde{u}+k^{2} \tilde{u}=0 & \text { in } D, \\
		\tilde{u}=0 & \text { on } S, \\
		\partial_{\nu}\tilde{u} = T\tilde{u}+g & \text{ on }  \Gamma.
	\end{array}\right.
\end{equation}

Let $C$ denote positive constants which are independent of large $k$ and $b>1$. Denote by $ \alpha \simeq \beta$ if there exists a positive constant $C$ independent of large $k$ and $b>1$ such that $\frac{1}{C} \alpha \leq \beta\leq C \alpha$.
Without loss of generality, assume the period $\Lambda = 2\pi$ and denote $\tilde{u}$ as $u$ for simplicity.

The scattered field $u$ can be written in the  single-layer potential representation:
\begin{equation}\label{u1}
	u(x_1,x_2)=\int_{0}^{\Lambda} \phi(s) G(x_1, x_2 ; s, 0) \mathrm{d} s,
\end{equation}
where
$\phi \in L^2(0,\Lambda)$ is an unknown periodic density function and $G$ is a quasi-periodic Green function given explicitly as
$$
G(x_1, x_2 ; s, t)=\frac{i}{2 \pi} \sum_{n \in \mathbb{Z}} \frac{1}{\beta_{n}} e^{i \alpha_{n}(x_1-s)+i \beta_{n}\vert x_2-t\vert }, \quad(x_1,x_2) \neq(s, t).
$$
Without loss of generality, it is assumed that the normal incidence is with $2 \pi$-periodicity, i.e.
$\theta=0, \alpha=0, \beta=k, \Lambda = 2 \pi.$
Therefore
$\alpha_n=n$ and $\beta_n=\sqrt{k^2-n^2}$,
and thus
\begin{equation}\label{Gn}
	G(x_1,x_2; s, t)=\frac{i}{2 \pi} \sum_{n \in \mathbb{Z}} \frac{1}{\sqrt{k^2-n^2}} e^{i n(x-s)+i \sqrt{k^2-n^2}\vert y-t\vert }
	\text{~~and~~}
	\phi(s)=\sum_{n \in \mathbb{Z}} \phi_{n} e^{i n s}.
\end{equation}
Substituting \eqref{Gn} into \eqref{u1} yields
$$
u^s (x_1,x_2)=\sum_{n \in \mathbb{Z}} \psi_n e^{i nx_1+i \sqrt{k^2-n^2}  x_2}.
$$
Since $u=u^{inc} +u^s$, the solution $u$ can be expressed as
\begin{equation*}
	\begin{aligned}
		u(x_1,x_2)=&
		\displaystyle\sum_{n=0} ^{\infty}  a_{n}  \sin \left(\sqrt{{k} ^{2} -n^{2} }  x_2\right) \sin (n x_1) + \sum_{n=0} ^{\infty}  b_{n}  \cos \left(\sqrt{{k} ^{2} -n^{2} }  x_2\right) \cos (n x_1)\\
		& + i \left[ \displaystyle\sum_{n=0} ^{\infty}  c_n \cos \left(\sqrt{{k} ^{2} -n^{2} }  x_2\right) \sin (n x_1) + \sum_{n=0} ^{\infty}  d_{n}  \sin \left(\sqrt{{k} ^{2} -n^{2} }  x_2\right) \cos (n x_1)\right]\\
		\triangleq& \mu+i \nu.
	\end{aligned}
\end{equation*}
where $a_n=-\psi_n+\psi_{-n} $, $b_n=\psi_n+\psi_{-n} $, $n\in \mathbb{N} _+$, $a_0=-\psi_0+1$, $b_0=\psi_0+1$;  $c_n=-a_n$, $d_n=b_n$, $n\in \mathbb{N} _+$, $c_0=\psi_0-1$, $d_0=\psi_0+1$.
For the case of non-normal cases, we only need to replace $n$ with $\alpha_n$ and $\sqrt{k^2-n^2} $ with $\sqrt{k^2-{\alpha_n} ^2} $.
Since $\mu$ and $\nu$ can be studied similarly, only detailed study on $\mu$ is provided here. Also, for convenience, in the following proof, the notation $\mu$ is denoted as $u$.

Let $u_n$, $(\partial_{\nu}u)_n$, $(Tu)_n$ be the sine coefficients, and $v_n$, $(\partial_{\nu}v)_n$, $(Tv)_n$ be the cosine coefficients of the expansions of $u$, $\partial_{\nu}u$, $Tu$, on $\Gamma$, respectively. That is,
\begin{equation}\label{unvn}
	u_{n}:=
	a_{n} \sin \left(\sqrt{k^{2}-n^{2}} b\right),
	v_{n}:=
	b_{n} \cos \left(\sqrt{k^{2}-n^{2}} b\right),
\end{equation}
\begin{equation}\label{partialun}
	\left(\partial_{\nu} u\right)_{n}:=
	a_{n} \sqrt{k^{2}-n^{2}} \cos \left(\sqrt{k^{2}-n^{2}} b\right),
\end{equation}
\begin{equation}\label{partialvn}
	\left(\partial_{\nu} v\right)_{n}:=
	-b_{n} \sqrt{k^{2}-n^{2}} \sin \left(\sqrt{k^{2}-n^{2}} b\right).
\end{equation}
Hence
\begin{equation*}
	u\vert_{\Gamma}(x_1) = \sum_{n=0}^{\infty} u_n \sin(n x_1) + \sum_{n=0}^{\infty} v_n \cos(n x_1),
\end{equation*}
\begin{equation*}
	\partial_{\nu} u\vert_{\Gamma}(x_1) = \sum_{n=0}^{\infty} (\partial_{\nu} u)_n \sin(n x_1) + \sum_{n=0}^{\infty} (\partial_{\nu} v)_n \cos(n x_1),
\end{equation*}
and
\begin{equation*}
	Tu\vert_{\Gamma}(x_1) = \sum_{n=0}^{\infty} (Tu)_n \sin(n x_1) + \sum_{n=0}^{\infty} (Tv)_n \cos(n x_1).
\end{equation*}

\begin{definition}\label{def:AB}
	For $u =\displaystyle\sum_{n=0}^{\infty} u_n \sin(n x_1) + \sum_{n=0}^{\infty} v_n \cos(n x_1)$, define
	$$\|u\|_{A}=\sqrt{\sum_{n=0}^{\infty}\left( \lvert u_{n}\rvert^{2} +\lvert v_{n}\rvert^{2} \right) \frac{1}{\sqrt{\lvert k^{2}-n^{2}\rvert}+\frac{1}{b}}},$$
	and
	$$\|u\|_{B}=\sqrt{\sum_{n=0}^{\infty}\left( \lvert u_{n}\rvert^{2} +\lvert v_{n}\rvert^{2}\right) \left(\sqrt{\lvert k^{2}-n^{2}\rvert}+ \frac{1}{b}\right)}.$$
Here, the norms $A$ and $B$ are formerly dual to each other and behave like $H^{-1/2}$, $H^{1/2}$, respectively. Moreover, a simple calculation gives that
	\begin{equation}\label{norm}
		\int_{\Gamma} u_1(x_1) \overline{u_2(x_1)} \mathrm{d} x_1 \leq \pi \|u_1\|_{A}\|u_2\|_{B}.
	\end{equation}
\end{definition}

\begin{definition}
	Define the lower frequency set $\mathbf{L}$ and the higher frequency set $\mathbf{H}$ by
	$$\mathbf{L}=\left\{n \in \mathbb{N} \mid n < k \right\}
	\text{~and~~}
	\mathbf{H}=\left\{n \in \mathbb{N} \mid n > k \right\}.$$
\end{definition}

Then, $u$, $\partial_{\nu}u$, $T(u)$ on $\Gamma$ defined above can be separated into the lower frequency part and the higher frequency part:
$$
u(x_1) =\mathcal{V}_\mathbf{L} +\mathcal{V}_\mathbf{H}, \quad
\partial_{\nu} u(x_1) =\mathcal{P}_\mathbf{L} +\mathcal{P}_\mathbf{H},\quad
(T (u))(x_1) = \mathcal{T}_\mathbf{L} +\mathcal{T}_\mathbf{H},$$
with
\begin{equation*}
	\begin{aligned}
		&\mathcal{V}_\mathbf{L} =\sum_{n \in \mathbf{L}} u_{n} \sin (n x_1)
		+\sum_{n \in \mathbf{L}} v_{n} \cos (n x_1),\\
		&\mathcal{V}_\mathbf{H}=\sum_{n \in \mathbf{H}} u_{n} \sin (n x_1)
		+\sum_{n \in \mathbf{H}} v_{n} \cos (n x_1), \\
		&\mathcal{P}_\mathbf{L}=\sum_{n \in \mathbf{L}}\left(\partial_{\nu} u\right)_{n} \sin (n x_1)
		+\sum_{n \in \mathbf{L}}\left(\partial_{\nu} v\right)_{n} \cos (n x_1),\\
		&\mathcal{P}_\mathbf{H}=\sum_{n \in \mathbf{H}}\left(\partial_{\nu} u\right)_{n} \sin (n x_1)
		+\sum_{n \in \mathbf{H}}\left(\partial_{\nu} v\right)_{n} \cos (n x_1),\\
		&\mathcal{T}_\mathbf{L} =\sum_{n \in \mathbf{L}}(T u)_{n} \sin (n x_1)
		+\sum_{n \in \mathbf{L}}(T v)_{n} \cos (n x_1),\\
		&\mathcal{T}_\mathbf{H}=\sum_{n \in \mathbf{H}}(T u)_{n} \sin (n x_1)
		+\sum_{n \in \mathbf{H}}(T v)_{n} \cos (n x_1).
	\end{aligned}
\end{equation*}

The higher frequency terms have useful properties presented in Lemma
\ref{Lem:H}.
\begin{lemma}\label{Lem:H}
	If $u(x_1,x_2)$ is the solution to the scattering problem \eqref{eq:u_deter}, then
	$$ \left\|\mathcal{P}_\mathbf{H}\right\|_{A}^{2}
	\simeq \sum_{n \in \mathbf{H}}\left(\partial_{\nu} u\right)_{n} \overline{u_{n}} + \sum_{n \in \mathbf{H}}\left(\partial_{\nu} v\right)_{n}\overline{v_{n}}   \simeq\left\|\mathcal{V}_\mathbf{H}\right\|_{B}^{2} .$$
\end{lemma}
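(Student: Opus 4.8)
The plan is to reduce the lemma --- which only involves the Fourier data of $u$ on $\Gamma$ --- to elementary scalar inequalities for $\sinh$ and $\cosh$. First I would use that for $n\in\mathbf{H}$ one has $k^2-n^2<0$; writing $\gamma_n:=\sqrt{n^2-k^2}>0$ and using the branch $\Im\beta_n\ge0$, we get $\sqrt{k^2-n^2}=i\gamma_n$, $\sqrt{|k^2-n^2|}=\gamma_n$, $\sin(i\gamma_n b)=i\sinh(\gamma_n b)$, $\cos(i\gamma_n b)=\cosh(\gamma_n b)$. Substituting into \eqref{unvn}--\eqref{partialvn} and abbreviating $S_n:=\sinh(\gamma_n b)$, $C_n:=\cosh(\gamma_n b)$, $w_n:=\gamma_n+\tfrac1b$, we obtain
\[ |u_n|^2=|a_n|^2 S_n^2,\quad |v_n|^2=|b_n|^2 C_n^2,\quad |(\partial_\nu u)_n|^2=\gamma_n^2|a_n|^2 C_n^2,\quad |(\partial_\nu v)_n|^2=\gamma_n^2|b_n|^2 S_n^2, \]
and $(\partial_\nu u)_n\overline{u_n}=\gamma_n|a_n|^2 S_nC_n\ge0$, $(\partial_\nu v)_n\overline{v_n}=\gamma_n|b_n|^2 S_nC_n\ge0$; in particular the middle quantity is the nonnegative number $M:=\sum_{n\in\mathbf{H}}\gamma_n(|a_n|^2+|b_n|^2)S_nC_n$, consistent with the two norms being nonnegative. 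Thus the lemma amounts to showing that the summands of
\[ \|\mathcal{P}_\mathbf{H}\|_A^2=\sum_{n\in\mathbf{H}}\frac{\gamma_n^2\big(|a_n|^2 C_n^2+|b_n|^2 S_n^2\big)}{w_n},\qquad M,\qquad \|\mathcal{V}_\mathbf{H}\|_B^2=\sum_{n\in\mathbf{H}}\big(|a_n|^2 S_n^2+|b_n|^2 C_n^2\big)w_n \]
are mutually comparable mode by mode, the sine part (amplitude $|a_n|^2$) being matched with the sine part of the others and likewise for the cosine part, since $u_n$ and $(\partial_\nu u)_n$ share the factor $a_n$ while $v_n$ and $(\partial_\nu v)_n$ share $b_n$.

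The scalar fact that drives everything is the two-sided bound $\tfrac12(1+t)\le t\coth t\le 1+t$ for $t\ge0$ (the upper bound from $e^{2t}-1\ge2t$, the lower from $\coth t\ge1/t$ and $\coth t\ge1$), i.e.\ $\gamma_n\coth(\gamma_n b)\simeq w_n$, which is precisely the weight occurring in $\|\cdot\|_A$ and $\|\cdot\|_B$; together with $t\tanh t\le1+t$ this immediately yields the term-wise inequalities $\gamma_n^2 C_n^2/w_n\le\gamma_n S_nC_n$, $\gamma_n^2 S_n^2/w_n\le\gamma_n S_nC_n$, $\gamma_n S_nC_n\le w_n S_n^2$ and $\gamma_n S_nC_n\le w_n C_n^2$. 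Summing these over $n\in\mathbf{H}$ already gives, with absolute constants, the ``easy half'' $\|\mathcal{P}_\mathbf{H}\|_A^2\le M\le\|\mathcal{V}_\mathbf{H}\|_B^2$.

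The genuine obstacle is the reverse chain $\|\mathcal{V}_\mathbf{H}\|_B^2\lesssim M\lesssim\|\mathcal{P}_\mathbf{H}\|_A^2$. Here the asymmetry bites: $\|\mathcal{P}_\mathbf{H}\|_A^2$ weights the sine part by $C_n^2$ and the cosine part by $S_n^2$ (and $\|\mathcal{V}_\mathbf{H}\|_B^2$ does the opposite), whereas $M$ weights both by the symmetric $S_nC_n$, and for $\gamma_n b$ small these are not comparable (e.g.\ $w_n C_n^2/(\gamma_n S_nC_n)=\coth(\gamma_n b)\,w_n/\gamma_n\to\infty$). To close the gap I would invoke Assumption \ref{assump:k}: for $n\in\mathbf{H}$ it forces $n-k\ge\varepsilon$, hence $\gamma_n=\sqrt{(n-k)(n+k)}\gtrsim\sqrt{\varepsilon k}$, so in the range relevant for Theorem \ref{thm:1} (cf.\ Remark \ref{re:1}) one has $\gamma_n b\gtrsim1$ uniformly in $n\in\mathbf{H}$, whence $S_n\simeq C_n$ and $w_n\simeq\gamma_n$. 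With $S_n\simeq C_n$ every summand of all three sums is $\simeq\gamma_n(|a_n|^2+|b_n|^2)S_nC_n$, and summation finishes the proof. I expect this hyperbolic bookkeeping --- keeping the $\simeq$-constants independent of $k$ and $b$ while exchanging $S_n^2$, $S_nC_n$ and $C_n^2$ --- to be the only subtle point; the rest is substitution and the inequality $t\coth t\simeq1+t$.
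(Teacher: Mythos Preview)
Your approach coincides with the paper's: both pass to hyperbolic functions via $\sqrt{k^2-n^2}=i\gamma_n$ for $n\in\mathbf H$ and compare the three quantities mode by mode. The paper's proof is much terser than yours---it simply asserts the mode-wise equivalence of the $A$- and $B$-norm summands, records the nonnegativity $(\partial_\nu u)_n\overline{u_n}\ge0$, $(\partial_\nu v)_n\overline{v_n}\ge0$, and stops. Your ``easy half'' $\|\mathcal P_{\mathbf H}\|_A^2\le M\le\|\mathcal V_{\mathbf H}\|_B^2$ and the scalar identity $t\coth t\simeq 1+t$ (equivalently $\gamma_n C_n\simeq w_n S_n$) are exactly the computations underlying the paper's bare assertion, at least for the sine ($|a_n|^2$) contributions.

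You have, however, put your finger on a genuine subtlety that the paper's proof glosses over: for the cosine ($|b_n|^2$) contributions the reverse bounds require $\gamma_n\tanh(\gamma_n b)\simeq w_n$, which fails as $\gamma_n b\to 0$. Your remedy via Assumption~\ref{assump:k} is the right one, and in fact it yields more than the restriction to Remark~\ref{re:1} that you invoke. From $n-k\ge\varepsilon$ one gets $\gamma_n b\ge b\sqrt{\varepsilon k}\ge\sqrt{\varepsilon k}$ (using $b>1$), so $\gamma_n b\ge1$ as soon as $k\ge1/\varepsilon$; since the entire analysis is carried out ``for large $k$'' with $\varepsilon$ fixed (compare Lemma~\ref{Lem:sqrtk}), this already gives $S_n\simeq C_n$ and $w_n\simeq\gamma_n$ with absolute constants, and the lemma follows. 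If one insists that the $\simeq$-constants be independent of $\varepsilon$ as well, then your restriction to the regime $\varepsilon\gtrsim 1/(b^2k)$ of Remark~\ref{re:1} is precisely what is needed; the paper's own argument does not treat the remaining case separately.
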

\begin{proof}
	If $n\in \mathbf{H}$,
	\begin{equation*}
		\begin{aligned}
			&\left( \lvert a_{n} \sqrt{k^{2}-n^{2}} \cos \left(\sqrt{k^{2}-n^{2}} b\right) \rvert^2 + \lvert -b_{n} \sqrt{k^{2}-n^{2}} \sin \left(\sqrt{k^{2}-n^{2}} b\right) \rvert^2 \right) \frac{1}{\sqrt{\lvert k^{2}-n^{2}\rvert}+\frac{1}{b}}\\
			\simeq & \left( \lvert a_{n} \sin \left(\sqrt{k^{2}-n^{2}} b\right) \rvert^2 + \lvert  b_{n} \cos \left(\sqrt{k^{2}-n^{2}} b\right) \rvert^2 \right) \left(\sqrt{\lvert k^{2}-n^{2}\rvert}+ \frac{1}{b}\right).
		\end{aligned}
	\end{equation*}
	Hence
	\begin{equation*}
		\left( \lvert  (\partial_{\nu}u)_n \rvert^2 + \lvert  (\partial_{\nu}v)_n \rvert^2 \right) \frac{1}{\sqrt{\lvert k^{2}-n^{2}\rvert}+\frac{1}{b}}
		\simeq \left( \lvert  u_n \rvert^2 + \lvert  v_n \rvert^2 \right) \left(\sqrt{\lvert k^{2}-n^{2}\rvert}+ \frac{1}{b}\right).
	\end{equation*}
	From \eqref{unvn}-\eqref{partialvn}, one can show that for any $n \in \mathbf{H}$,
	$\left(\partial_{\nu} u\right)_{n} \overline{u_{n}} \geq 0$, $\left(\partial_{\nu} v\right)_{n} \overline{v_{n}} \geq 0$.
	Therefore,
	$$ \left\|\mathcal{P}_\mathbf{H} \right\|_{A}^{2}
	\simeq \sum_{n \in \mathbf{H}}\left(\partial_{\nu} u\right)_{n} \overline{u_{n}} + \sum_{n \in \mathbf{H}}\left(\partial_{\nu} v\right)_{n}\overline{v_{n}} .$$
\end{proof}

The next result provides a quantitative estimate of the function $u$ and its normal derivative.

\begin{lemma}\label{Lem:sqrtk}
	If $u(x_1,x_2)$ is the solution to the scattering problem \eqref{eq:u_deter}, then
	\begin{equation}\label{Lemineq}
		\Lambda \sum_{n\in \mathbb{Z} }  \sqrt{\vert {k} ^{2} -\alpha_n^2\vert}  \vert\hat{u} _n\vert^2 \geq \sqrt{\varepsilon}  \sqrt{k}  \|u\|_{L^2(\Gamma)} ^2.
	\end{equation}
\end{lemma}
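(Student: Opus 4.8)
The plan is to recognize \eqref{Lemineq} as essentially a scalar Fourier‑multiplier inequality dressed up by Parseval's identity on $\Gamma$, so that the hypothesis "$u$ solves \eqref{eq:u_deter}" plays no real role beyond guaranteeing that the trace $u|_{\Gamma}$ has a quasi‑periodic Fourier expansion with coefficients $\hat u_n$. Recall that in the reduced setting of this section $\Lambda = 2\pi$ and $\alpha_n = n$, $n\in\mathbb{Z}$, and that by Parseval $\|u\|_{L^2(\Gamma)}^2 = \Lambda\sum_{n\in\mathbb{Z}}|\hat u_n|^2$ (this is also consistent with the sine/cosine coefficients $u_n,v_n$ of \eqref{unvn}). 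Hence I would first reduce the claim to the pointwise bound
\[
 \sqrt{|k^2-\alpha_n^2|}\ \ge\ \sqrt{\varepsilon}\,\sqrt{k}\qquad\text{for every } n\in\mathbb{Z},
\]
and then multiply by $|\hat u_n|^2\ge 0$, sum over $n$, and multiply through by $\Lambda$ to recover \eqref{Lemineq}. If the left‑hand side of \eqref{Lemineq} is infinite there is nothing to prove, so one may assume $u|_{\Gamma}\in H^{1/2}(\Gamma)$.

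For the pointwise bound, fix $n$ and set $m:=|n|\ge 0$. Since $\alpha_n^2 = n^2 = m^2$ and $k>0$, I would factor
\[
 |k^2-\alpha_n^2| \;=\; |k^2-m^2| \;=\; |k-m|\,(k+m),
\]
and then invoke Assumption \ref{assump:k} with the index $m\in\mathbb{N}\subset\mathbb{Z}$ (for which $\alpha_m = m$): this gives $|k-m| = |k-\alpha_m|\ge\varepsilon$. Combined with the trivial bound $k+m\ge k$, this yields $|k^2-\alpha_n^2|\ge\varepsilon(k+m)\ge\varepsilon k$, hence $\sqrt{|k^2-\alpha_n^2|}\ge\sqrt{\varepsilon k}=\sqrt{\varepsilon}\sqrt{k}$, which is exactly the scalar inequality above. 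Summing as described completes the argument.

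I expect no genuine analytic obstacle here; the only thing requiring care is the bookkeeping. One must apply the resonance‑exclusion assumption to the absolute value $|n|$ rather than to $n$ itself — legitimate because in the reduced setting $\{\alpha_n\}_{n\in\mathbb Z}=\mathbb Z$ is symmetric about the origin, so both $k=\alpha_n$ and $k=-\alpha_n$ are covered — and one should observe that it is precisely the complementary factor $k+|n|\ge k$ that supplies the extra power $k^{1/2}$ in \eqref{Lemineq}, while Assumption \ref{assump:k} contributes the $\varepsilon^{1/2}$. Finally one should double‑check the Parseval normalization so that the factor $\Lambda$ in front of the left‑hand side of \eqref{Lemineq} is correctly accounted for.
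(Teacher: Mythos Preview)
Your proposal is correct and follows essentially the same route as the paper: Parseval on $\Gamma$ together with the pointwise bound $\sqrt{|k^{2}-\alpha_n^{2}|}\ge\sqrt{\varepsilon}\sqrt{k}$, then sum. The paper states the pointwise bound in one line without justification, whereas you spell out the factorization $|k^{2}-m^{2}|=|k-m|(k+m)$ with $m=|n|$ and explain why Assumption~\ref{assump:k} applies to $m$; this extra care is welcome but does not constitute a different argument.
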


\begin{proof}
	Since $\Lambda \sum_{n\in \mathbb{Z} }  \vert\hat{u} _n\vert^2 = \|u\|_{L^2(\Gamma)} ^2$ and
	$\sqrt{\vert {k} ^{2} -\alpha_n^2 \vert}  \geq \sqrt{\varepsilon}  \sqrt{k} $ for $\vert k-\alpha_n\vert\geq \varepsilon, \ \forall n \in \mathbb{Z} $,
	$$\Lambda \sum_{n\in \mathbb{Z} }  \sqrt{\vert{k} ^{2} -\alpha_n^2\vert}  \vert\hat{u} _n\vert^2 \geq \sqrt{\varepsilon}  \sqrt{k}  \|u\|_{L^2(\Gamma)} ^2,$$
	which completes the proof.
\end{proof}

\begin{remark}
	Different from Lemma 3.5 in \cite{Bao2012} for the scattering by a cavity, TBC of gratings yield additional difficulties on resonances and  stability.
	Hence, in this paper, a uniform distance $\varepsilon$ is introduced to exclude resonances and thus obtain the estimate \eqref{Lemineq} in Lemma \ref{Lem:sqrtk}, which is essential for the proof procedure of the $k$-explicit stability for periodic structures.
\end{remark}

Next, the relation between the norm of $A$ (or $B$) and the $L^2$ norm is established in Remark \ref{rem:AL}.

\begin{remark}\label{rem:AL}
	For the norm $A$, if there exists a natural number $n^*$ such that $\sqrt{\vert k^2-{n^*}^2 \vert}\leq \frac{1}{b}$, then $n^*$ is very close to $k$ so that $\vert k-{n^*} \vert\leq \frac{1}{ b^2 k}$. Since $\sqrt{\vert k^2-n^2 \vert}\geq \sqrt{\frac{k}{2}}$,
	the norm $A$ can be simplified as
	$$\|u\|_{A} \simeq \sqrt{\left(\sum_{n \in \mathbb{N} \backslash\left\{n^{*}\right\}}\left(\lvert u_{n}\rvert^{2}+\lvert v_{n}\rvert^{2}\right) \frac{1}{\sqrt{\lvert k^{2}-n^{2}\rvert}}\right)+\left(\lvert u_{n^*}\rvert^{2}+\lvert v_{n^*}\rvert^{2}\right) b}.$$	
	If such $n^*$ does not exist, then the norm $A$ has no relation to $b$ since
	$$\|u\|_A \simeq \sqrt{\sum_{n \in \mathbb{N}}\left(\lvert u_{n}\rvert^{2}+\lvert u_{n}\rvert^{2}\right) \frac{1}{\sqrt{\lvert k^{2}-n^{2}\rvert}}}.$$
	Hence, whether $n^*$ exists or not, there always exists a constant $C$ independent of $k$ and $b$ such that
	$$\|u\|_{A}^{2} \leq C\left(\frac{1}{\sqrt{k}}\|u\|_{L^{2}(\Gamma)}^{2}+b \left( \lvert u_{k^{*}}\rvert^{2}+\lvert v_{k^{*}}\rvert^{2}\right)\right),$$
	where $k^*=\arg\min_{n\in\mathbb{N}} \vert n-k\vert $, $u_{k^*} = \frac{1}{\pi} \int_{\Gamma} u(x_1) \sin \left(k^{*} x_1\right) \mathrm{d} x_1 $ and $v_{k^*} = \frac{1}{\pi} \int_{\Gamma} u(x_1) \cos \left(k^{*} x_1\right) \mathrm{d} x_1$.
	It follows immediately that
	\begin{equation}
		\|u\|_{A}^{2} \leq C b\|u\|_{L^{2}(\Gamma)}^{2}.
	\end{equation}
	The similar approximation can be deduced for the norm $B$:
	\begin{equation}\label{ineq:normB}
		b\|u\|_{B}^{2} \leq C \|u\|_{L^{2}(\Gamma)}^{2}.
	\end{equation}
\end{remark}

As for the proof of Theorem \ref{thm:1},
the first step given in Lemma \ref{lem:step1} is to estimate the norm of $u$ in $D$ by the norms of $u$ on $\Gamma$, i.e., the norms $A$ and $B$, and the second step given in Lemma \ref{lem:step2} is to estimate $\|\partial_{\nu}u\|_A$ and $\|u\|_B$ by  $\|g\|_A$. Finally, with the correlation between the norm $A$ and the $L^2$ norm discussed in Remark \ref{rem:AL}, the stability estimate of Theorem \ref{thm:1} is proved.

\begin{lemma}\label{lem:step1}
	Let $u$ be a solution to the scattering problem \eqref{eq:u_deter}. Under Assumption \ref{assump:k}, there exists a constant $C$ independent of $b>1$ and large $k$,  such that
	\begin{equation}
		\|\nabla u \|_{L^2(D)} + k \|u\|_{L^2(D)} \leq C  b k \left( \| \partial_{\nu}u \|_A + \|u\|_B \right),
	\end{equation}
	where the norms $A$ and $B$ are defined in Definition \ref{def:AB}.
\end{lemma}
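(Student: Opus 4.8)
The plan is to establish the inequality by combining an energy identity with a direct Fourier computation, both carried out on the rectangle $R=(0,2\pi)\times(0,b)$ rather than on $D$ itself. After the harmless normalization that places the grating above the auxiliary line $\{x_2=0\}$ one has $D\subset R$, and the solution extends to all of $R$ as the modal series $u=\sum_{n\ge0}a_n\sin(\xi_n x_2)\sin(nx_1)+\sum_{n\ge0}b_n\cos(\xi_n x_2)\cos(nx_1)$ of the paragraph preceding \eqref{unvn}, where $\xi_n=\sqrt{k^2-n^2}$ is nonzero for every $n$ thanks to Assumption \ref{assump:k}; this extension solves the Helmholtz equation on all of $R$ and is $2\pi$-periodic in $x_1$. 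Since $D\subset R$, it suffices to prove $\|\nabla u\|_{L^2(R)}+k\|u\|_{L^2(R)}\le Cbk\big(\|\partial_\nu u\|_A+\|u\|_B\big)$.

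First I would multiply $\Delta u+k^2u=0$ by $\overline{u}$ and integrate by parts over $R$: the contributions from the two vertical sides cancel by periodicity, and the one from the bottom $\{x_2=0\}$ vanishes because there $u$ reduces to a pure cosine series in $x_1$ while $\partial_{x_2}u$ reduces to a pure sine series, so only the $\Gamma$-term survives and gives
\[
\|\nabla u\|_{L^2(R)}^2-k^2\|u\|_{L^2(R)}^2=\Re\int_\Gamma(\partial_\nu u)\,\overline{u}\,\mathrm{d}x_1\le\pi\|\partial_\nu u\|_A\|u\|_B
\]
by \eqref{norm}. Hence the whole estimate reduces to the $L^2$ bound $\|u\|_{L^2(R)}^2\le Cb^2\big(\|\partial_\nu u\|_A^2+\|u\|_B^2\big)$: this bound and the displayed inequality together give $\|\nabla u\|_{L^2(R)}\le Cbk\big(\|\partial_\nu u\|_A+\|u\|_B\big)$ as well (using $b,k\ge1$), and adding the two yields the claim.

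For the $L^2$ bound, Parseval in $x_1$ reduces $\|u\|_{L^2(R)}^2$, up to absolute constants, to $\sum_{n\ge0}\big(|a_n|^2\int_0^b|\sin(\xi_n x_2)|^2\,\mathrm{d}x_2+|b_n|^2\int_0^b|\cos(\xi_n x_2)|^2\,\mathrm{d}x_2\big)$. For $n\in\mathbf{H}$ the exponent $\xi_n$ is purely imaginary, so by monotonicity of $\sinh$ and $\cosh$ the $x_2$-integrals are controlled by $\sinh^2(\sqrt{n^2-k^2}\,b)$ and $\cosh^2(\sqrt{n^2-k^2}\,b)$ divided by $\sqrt{n^2-k^2}$; since $|u_n|=|a_n|\,|\sin(\xi_n b)|$ and $|v_n|=|b_n|\,|\cos(\xi_n b)|$ by \eqref{unvn}, the high-frequency part of $\|u\|_{L^2(R)}^2$ is at most $C\sum_{n\in\mathbf{H}}\big(|u_n|^2+|v_n|^2\big)/\sqrt{|k^2-n^2|}\le C\big(\|u\|_B^2+\|\partial_\nu u\|_A^2\big)$, which is essentially the content of Lemma \ref{Lem:H}.

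The low-frequency part $n\in\mathbf{L}$ is the main obstacle, and it is where the extra factor $b$ is paid: there $\xi_n$ may be small, so $\sin(\xi_n b)$ is small and $|a_n|$ can far exceed $|u_n|$. The remedy is the Pythagorean identity $|a_n|^2=|u_n|^2+|(\partial_\nu u)_n|^2/\xi_n^2$ and its analogue with $b_n,v_n,(\partial_\nu v)_n$, valid because $\xi_n\ne0$ — this is exactly why $\|\partial_\nu u\|_A$ has to appear on the right-hand side. Combining these identities with the elementary bounds $\int_0^b|\sin(\xi_n x_2)|^2\,\mathrm{d}x_2\le C\min\{b,\xi_n^2b^3\}$ and $\int_0^b|\cos(\xi_n x_2)|^2\,\mathrm{d}x_2\le Cb$, and splitting into the two cases $\xi_n b\le1$ and $\xi_n b>1$, I would check term by term that the low-frequency part of $\|u\|_{L^2(R)}^2$ is bounded by
\[
Cb^2\sum_{n\in\mathbf{L}}\Big[\big(|u_n|^2+|v_n|^2\big)\big(\xi_n+1/b\big)+\big(|(\partial_\nu u)_n|^2+|(\partial_\nu v)_n|^2\big)\big(\xi_n+1/b\big)^{-1}\Big]\le Cb^2\big(\|u\|_B^2+\|\partial_\nu u\|_A^2\big).
\]
This case-by-case bookkeeping is the delicate step: in each of the two cases one must verify that the ratio of every model term to the matching contribution of $\|u\|_B^2$ or $\|\partial_\nu u\|_A^2$ is $O(b^2)$, with a constant independent of $k$, $b$ and $\varepsilon$ (the independence of $\varepsilon$ holds because Assumption \ref{assump:k} enters only to guarantee $\xi_n\ne0$). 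Summing the low- and high-frequency regimes then gives the $L^2$ bound, and hence the lemma.
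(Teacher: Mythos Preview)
Your approach is the paper's: the energy identity reduces everything to the mode-by-mode bound $\|u\|_{L^2}^2\le Cb^2(\|\partial_\nu u\|_A^2+\|u\|_B^2)$, then one splits on the size of $|\xi_n|b$. Two places in your sketch need repair, and they are exactly where the paper's organization differs from yours.

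First, the high-frequency step. Your claim $\sum_{n\in\mathbf H}(|u_n|^2+|v_n|^2)/\sqrt{n^2-k^2}\le C(\|u\|_B^2+\|\partial_\nu u\|_A^2)$ is \emph{not} Lemma~\ref{Lem:H} (that lemma only says $\|\mathcal P_{\mathbf H}\|_A\simeq\|\mathcal V_{\mathbf H}\|_B$, so adding $\|\partial_\nu u\|_A^2$ buys nothing here); the inequality you wrote needs $1/\eta_n\le C(\eta_n+1/b)$, i.e.\ a uniform lower bound on $\eta_n=\sqrt{n^2-k^2}$, which drags in $\varepsilon$. The paper sidesteps this by letting its Case~(ii) cover the full near-resonance regime $b\sqrt{|k^2-n^2|}<1$ for \emph{both} signs of $k^2-n^2$, using the Taylor approximation $|\sin(\xi_n x_2)|\simeq|\xi_n|x_2$, so that Case~(iii) only treats $n\in\mathbf H$ with $\eta_n b\ge1$ and no small-$\eta_n$ issue arises.

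Second, in your low-frequency case $\xi_n b\le1$ the Pythagorean-then-term-by-term plan does not close for the cosine piece: from $|b_n|^2=|v_n|^2+|(\partial_\nu v)_n|^2/\xi_n^2$ the second summand contributes $|(\partial_\nu v)_n|^2 b/\xi_n^2$, whose ratio to $b^2|(\partial_\nu v)_n|^2/(\xi_n+1/b)$ is $\sim(\xi_n b)^{-2}$, unbounded. The cure (and what the paper does in its Case~(ii)) is not to decompose $|b_n|^2$ at all there: since $\cos(\xi_n b)\simeq1$ one has $|b_n|\simeq|v_n|$ directly, whence $|b_n|^2 b\lesssim |v_n|^2 b\le b^2|v_n|^2(\xi_n+1/b)$. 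The Pythagorean identity is only genuinely needed for the sine coefficient $a_n$, where $\sin(\xi_n b)$ can vanish.
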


\begin{lemma}\label{lem:step2}
	Let $u$ be a solution to the scattering problem \eqref{eq:u_deter}. Under Assumption \ref{assump:k}, there exists a constant $C$ independent of $b>1$ and large $k$, such that
	\begin{equation}
		\| \partial_{\nu}u \|_A + \|u\|_B \leq C \max \left\{ \dfrac{  b^{\frac{1} {2} } {k}  }  {\sqrt{\varepsilon} }
		,  b^{\frac{3} {2} }  {k} ^{\frac{3} {2} }  \right\}   \|g\|_A,
	\end{equation}
	where the norms $A$ and $B$ are defined in Definition \ref{def:AB}.
\end{lemma}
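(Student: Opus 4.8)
The plan is to carry out the whole estimate on the artificial boundary $\Gamma$, working with the sine/cosine Fourier coefficients of $u$, $\partial_\nu u$ and $Tu$ introduced in \eqref{unvn}--\eqref{partialvn}, and splitting every quantity into its low-frequency part over $\mathbf{L}$, its high-frequency part over $\mathbf{H}$, and the single ``critical'' mode $n=k^{*}=\arg\min_{n\in\mathbb{N}}|n-k|$ singled out in Remark \ref{rem:AL}. The boundary condition $\partial_\nu u=Tu+g$ reads, mode by mode, $(\partial_\nu u)_n=\mathrm{i}\beta_n u_n+g_n$ and $(\partial_\nu v)_n=\mathrm{i}\beta_n v_n+g_n$; feeding in \eqref{unvn}--\eqref{partialvn} turns this into one scalar relation per mode whose coefficient is, up to a unimodular factor, $\beta_n$ on $\mathbf{L}$ and $\sqrt{n^2-k^2}\,e^{\sqrt{n^2-k^2}\,b}$ on $\mathbf{H}$. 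Summing the resulting per-mode bounds against the weights defining $\|\cdot\|_A$ and $\|\cdot\|_B$ is the skeleton of the argument.

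For $n\in\mathbf{H}$ I would use Lemma \ref{Lem:H}, which already gives $\|\mathcal{P}_\mathbf{H}\|_A^2\simeq\|\mathcal{V}_\mathbf{H}\|_B^2$; since $T$ is real and sign-definite on $\mathbf{H}$, the per-mode relation lets me express each evanescent amplitude through the corresponding mode of $g$ and conclude that the whole $\mathbf{H}$-contribution to $\|\partial_\nu u\|_A+\|u\|_B$ is $\le C\|g\|_A$, with no powers of $b$ or $k$. For $n\in\mathbf{L}$, $\beta_n=\sqrt{k^2-n^2}$ is real and --- this is where Assumption \ref{assump:k} enters --- $\beta_n^2=(k-\alpha_n)(k+\alpha_n)\ge\varepsilon k$, so $\beta_n\ge\sqrt{\varepsilon k}$ and the per-mode coefficient is bounded below by $\sqrt{\varepsilon k}$. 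The $A$-weight $(\beta_n+1/b)^{-1}$ balances this favorably, so $\|\mathcal{P}_\mathbf{L}\|_A$ is controlled directly by $\|g\|_A$, whereas the $B$-weight $(\beta_n+1/b)$ forces an extra $\varepsilon^{-1/2}$-type loss in $\|\mathcal{V}_\mathbf{L}\|_B$. To handle the ``$1/b$'' part of the $B$-norm, i.e.\ to convert a $\beta_n$-weighted $\mathbf{L}$-sum into an $L^2(\Gamma)$ quantity, I would take the imaginary part of the variational identity $\int_D|\nabla u|^2-k^2\int_D|u|^2=\int_\Gamma Tu\,\overline{u}+\int_\Gamma g\,\overline{u}$, whose left-hand side uses $u=0$ on $S$ and whose first right-hand term has imaginary part equal to a positive multiple of $\sum_{n\in\mathbf{L}}\beta_n(|u_n|^2+|v_n|^2)$, bound $|\int_\Gamma g\,\overline{u}|\le\pi\|g\|_A\|u\|_B$ by \eqref{norm}, and then invoke Lemma \ref{Lem:sqrtk} (inequality \eqref{Lemineq}) to trade $\sum\sqrt{|k^2-\alpha_n^2|}\,|\hat{u}_n|^2$ against $\sqrt{\varepsilon k}\,\|u\|_{L^2(\Gamma)}^2$. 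Absorbing the resulting $\|u\|_B$ on the right then closes the $\mathbf{L}$-estimate.

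The critical mode $k^{*}$ is isolated and treated exactly as in Remark \ref{rem:AL}: its $A$-weight $(\sqrt{|k^2-(k^{*})^2|}+1/b)^{-1}$ behaves like $b$ when $\sqrt{|k^2-(k^{*})^2|}\le1/b$ --- which, under Assumption \ref{assump:k}, can only happen when $\varepsilon\lesssim 1/(b^2k)$ --- and like $(\varepsilon k)^{-1/2}$ otherwise. Carrying these two regimes through the $\mathbf{L}$-bookkeeping above, together with the $A/B$ versus $L^2(\Gamma)$ comparisons of Remark \ref{rem:AL} (in particular \eqref{ineq:normB}), and adding the $\mathbf{H}$-, $\mathbf{L}$- and critical-mode pieces, produces the two arguments $b^{1/2}k\,\varepsilon^{-1/2}$ and $b^{3/2}k^{3/2}$ of the maximum.

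The main obstacle is precisely this low-frequency régime: the near-resonant propagating modes together with the critical mode $k^{*}$ make the per-mode relations almost singular, and the whole difficulty is to keep the estimate sharp --- to extract the exact powers of $b$ and $k$ rather than settle for a lossy bound --- which is what forces one to use the full strength of Assumption \ref{assump:k} through $\beta_n\ge\sqrt{\varepsilon k}$ and Lemma \ref{Lem:sqrtk}, and to treat the single mode $k^{*}$ by the dichotomy of Remark \ref{rem:AL}.
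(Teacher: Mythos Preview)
Your per-mode strategy has a genuine gap. The transparent boundary condition $\partial_\nu u=Tu+g$ on $\Gamma$ is, mode by mode, nothing but the outgoing radiation condition for the scattered field, and that condition is already built into the Rayleigh representation from which \eqref{unvn}--\eqref{partialvn} are derived. Concretely, for the actual data $g=-2i\beta e^{i\alpha x_1-i\beta b}$ every Fourier mode $\hat g_n$ with $n\neq 0$ vanishes, so the identity $(\widehat{\partial_\nu u})_n=i\beta_n\hat u_n+\hat g_n$ gives no information whatsoever about the size of $\hat u_n$ for $n\neq 0$; you cannot ``express each evanescent amplitude through the corresponding mode of $g$'' because those modes of $g$ are zero. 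The amplitudes $a_n,b_n$ (equivalently the $\psi_n$) are determined not by the TBC but by the Dirichlet condition $u=0$ on the curved profile $S$, which couples all Fourier modes and cannot be inverted one $n$ at a time. Both of your key claims --- that the entire $\mathbf H$-contribution is $\le C\|g\|_A$ with no loss, and that $\|\mathcal P_{\mathbf L}\|_A$ is ``controlled directly by $\|g\|_A$'' via the per-mode coefficient --- collapse for this reason.

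The paper proceeds quite differently. It never isolates individual amplitudes; instead it starts from $\int_\Gamma g\bar u=\int_\Gamma(\partial_\nu u-Tu)\bar u$ and splits this into an imaginary part $-\Lambda\sum_{|n|<k}\sqrt{k^2-n^2}\,|\hat u_n|^2$ (this is the piece you correctly identified) and a real part $\int_\Gamma\partial_\nu u\,\bar u+\Lambda\sum_{|n|>k}\sqrt{n^2-k^2}\,|\hat u_n|^2$, where the Dirichlet condition on $S$ enters only through $\int_\Gamma\partial_\nu u\,\bar u=\int_D(|\nabla u|^2-k^2|u|^2)$. The high-frequency control then comes from the \emph{real} part, not from a per-mode inversion, and requires a three-case bootstrap governed by two auxiliary parameters: one compares $\|g\|_A$ against $\gamma\|\partial_\nu u\|_A$, the other compares $\|\mathcal V_{\mathbf L}\|_B$ against $\delta\|\mathcal P_{\mathbf L}\|_A$. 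A counting bound $\sum_{n\in\mathbf L}(\sqrt{|k^2-n^2|}+1/b)^{-1}\le Cb$ is used to estimate $\|\mathcal T_{\mathbf L}\|_A^2$, and the choices $\delta\sim(bk)^{-1}$, $\gamma\sim\sqrt\delta$ are what produce the two arguments $b^{1/2}k/\sqrt\varepsilon$ and $b^{3/2}k^{3/2}$ of the maximum. Your use of the imaginary part, of Lemma~\ref{Lem:sqrtk}, and of the $k^*$-dichotomy from Remark~\ref{rem:AL} are all ingredients the paper uses too, but the missing piece --- controlling $\mathbf H$ and closing the estimate --- needs this global real-part/case-analysis machinery rather than a mode-by-mode reading of the TBC.
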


\subsection{Proof of Lemma \ref{lem:step1} and Lemma \ref{lem:step2}}

The proof of the bound in Lemma \ref{lem:step1} is mainly based on the definition of the norms and the representation of the series, which means that a stability estimate for the solution $u$ in $D$ can be given in terms of its Dirichlet and Neumann data on $\Gamma$.

\begin{proof}[Proof of Lemma~{\upshape\ref{lem:step1}}]
	From the equation $\Delta u + k^2 u = 0 $ in $D$ and its boundary condition on $S$
	as well as the quasi-periodicity of the solution $u$, one has
	$$ \int_{\Gamma} u \overline{\partial_{\nu} u} \mathrm{d} x_1=\int_{D}\left(\vert\nabla u\vert^{2}-k^{2}\vert u\vert^{2}\right) \mathrm{d} x_1 \mathrm{d} x_2. $$
	
	The left-hand side of the equality above is obviously bounded by $ \left\|\partial_{\nu} u\right\|_{A}^{2}+\|u\|_{B}^{2}$.
	However, the right-hand side has a negative sign for the term $\vert u\vert^2$. In order to prove
	Lemma \ref{lem:step1}, it suffices to show that
	\begin{equation}\label{6.1}
		\|u\|_{L^{2}(D)}^{2} \leq C b^{2}\left(\left\|\partial_{\nu} u\right\|_{A}^{2}+\|u\|_{B}^{2}\right).
	\end{equation}
	
	Note that $\partial_{\nu}u$, $u$ on $\Gamma$, and $u$ in $D$ can be expressed in terms of sine and cosine functions in the direction of $x_1$. Because of the orthogonality of sine and cosine functions, one may assume
	that
	$$ u(x_1,x_2)=\eta \sin \left(\sqrt{k^{2}-n^{2}} x_2\right) \sin (n x_1) + \zeta   \cos \left(\sqrt{k^{2}-n^{2}} x_2\right) \cos (n x_1)\text { in } D, \ n\neq k. $$
	For $n \neq k$, consider three cases:
	(i) $n<k$ and $b \sqrt{k^2-n^2}\geq 1$;
	(ii) $b \sqrt{\vert k^2-n^2 \vert}<1$; and
	(iii) $n>k$ and $b \sqrt{\vert k^2-n^2 \vert}\geq 1$.
	
	For Case (i), it is easy to see that $\|u\|_{L^{2}(D)}^{2} \simeq b$. On the boundary $\Gamma$,
	$$
	\begin{aligned}
		\partial_{\nu} u &=\eta \sqrt{k^{2}-n^{2}} \cos \left(\sqrt{k^{2}-n^{2}} b\right) \sin (n x_1) - \zeta \sqrt{k^{2}-n^{2}} \sin \left(\sqrt{k^{2}-n^{2}} b\right) \cos (n x_1), \\
		u &=\eta \sin \left(\sqrt{k^{2}-n^{2}} b\right) \sin (n x_1) + \zeta   \cos \left(\sqrt{k^{2}-n^{2}} b\right) \cos (n x_1).
	\end{aligned}
	$$
	From the definition of the norms A and B, one has
	$$
	\begin{array}{l}
		\left\|\partial_{\nu} u\right\|_{A}^{2}+\|u\|_{B}^{2} \\
		\qquad \begin{aligned}
			=& \vert\eta\vert^2 \left(\frac{k^{2}-n^{2}}{\sqrt{k^{2}-n^{2}}+\frac{1}{b}}\lvert \cos \left(\sqrt{k^{2}-n^{2}} b\right)\rvert^{2}
			+\left(\sqrt{k^{2}-n^{2}}+\frac{1}{b}\right)\lvert \sin \left(\sqrt{k^{2}-n^{2}} b\right)\rvert^{2} \right)\\
			& + \vert\zeta\vert^2 \left(\frac{k^{2}-n^{2}}{\sqrt{k^{2}-n^{2}}+\frac{1}{b}}\lvert \sin \left(\sqrt{k^{2}-n^{2}} b\right)\rvert^{2}
			+\left(\sqrt{k^{2}-n^{2}}+\frac{1}{b}\right)\lvert \cos \left(\sqrt{k^{2}-n^{2}} b\right)\rvert^{2} \right)\\
			\geq & \frac{1}{2} (\vert\eta\vert^2+\vert\zeta\vert^2)\sqrt{k^{2}-n^{2}} \geq \frac{(\vert\eta\vert^2+\vert\zeta\vert^2)}{2 b} \geq C \frac{1}{b^{2}}\|u\|_{L^{2}(D)}^{2}.
		\end{aligned}
	\end{array}
	$$
	
	For Case (ii), one has
	$$ \lvert \sin \left(\sqrt{k^{2}-n^{2}} x_2\right)\rvert \simeq \sqrt{\lvert k^{2}-n^{2}\rvert} x_2 \text{  and }
	\lvert \cos \left(\sqrt{{k} ^{2} -n^{2} }  x_2\right)\rvert \simeq 1- \dfrac{1} {2}  \lvert {k} ^{2} -n^{2} \rvert x_2^2$$
	from $ \sqrt{\lvert k^{2}-n^{2}\rvert} b<1 $.
	Therefore, $\|u\|_{L^{2}(D)}^{2} \simeq\lvert k^{2}-n^{2}\rvert b^{3}$ and
	$$ \left\|u\left(x, b\right)\right\|_{B}^{2} \simeq\left(\sqrt{\lvert k^{2}-n^{2}\rvert}+\frac{1}{b}\right)\lvert k^{2}-n^{2}\rvert b^{2} \simeq b\lvert k^{2}-n^{2}\rvert, $$
	which implies the inequality \eqref{6.1}.
	
	Finally, for Case (iii),
	$$
	\begin{aligned}
		u&=\eta \sin \left(\sqrt{k^{2}-n^{2}} x_2\right) \sin (n x_1) + \zeta   \cos \left(\sqrt{k^{2}-n^{2}} x_2\right) \cos (n x_1) \\
		&= \eta i \sinh \left(\sqrt{n^{2}-k^{2}} x_2\right) \sin (n x_1) + \zeta  \cosh \left(\sqrt{n^{2}-k^{2}} x_2\right) \cos (n x_1) \text { in } D.
	\end{aligned}
	$$
	The representation of $u$ yields
	$$ \|u\|_{L^{2}(D)}^{2}  \leq C \frac{1}{\sqrt{n^{2}-k^{2}}} e^{2 \sqrt{n^{2}-k^{2}} b} \leq C b^{2}\left(\sqrt{n^{2}-k^{2}}+\frac{1}{b}\right) e^{2 \sqrt{n^{2}-k^{2}} b} \leq C b^{2}\left\|u\left(\cdot, b\right)\right\|_{B}^{2}. $$
\end{proof}

For the proof of Lemma \ref{lem:step2}, the main difficulty is due to the fact that  the boundary data $\partial_{\nu} u = T u +g$ does not yield direct estimates on $\| \partial_{\nu}u \|_A$ and  $\|u\|_B$. To be specific, let us consider the key term $-\mathfrak{Im}\int_{\Gamma} g \bar{u} \mathrm{d} x_1$. Obviously,
\begin{equation}\label{diff}
	- \mathfrak{Im}\int_{\Gamma} g \bar{u} \mathrm{d} x_1 =  \Lambda \sum_{\vert \alpha_n \vert<k}  \sqrt{{k} ^{2} -{\alpha_n} ^2}   \vert\hat{u} _n\vert^2 \geq 0.
\end{equation}
However, the integral in \eqref{diff} does not contain information on $\hat{u}$ for $\vert \alpha_n \vert > k $.
In order to obtain the stability, we have to analyze $\hat{u}$ for $\vert \alpha_n \vert > k $, and thus to generate a bound on $u$. Next, we give the proof of Lemma \ref{lem:step2}.

\begin{proof}[Proof of Lemma~{\upshape\ref{lem:step2}}]
	From the definition,
	\begin{equation*}
		\begin{aligned}
			\int_{\Gamma} g \bar{u} \mathrm{d} x_1 & = \int_{\Gamma} (\partial_{\nu} u - T(u)) \bar{u} \mathrm{d} x_1\\
			& = \int_{\Gamma}  \partial_{\textbf{n} }  u \bar{u}  \mathrm{d} x_1- i  \Lambda \sum_{\vert n \vert<k}  \sqrt{{k} ^{2} -n^2}   \vert\hat{u} _n\vert^2  +
			\Lambda \sum_{\vert n\vert>k}  \sqrt{n^2-{k} ^{2} }  \vert \hat{u} _n \vert^2 \\
			& \triangleq \mathfrak{Re} + i \mathfrak{Im},
		\end{aligned}
	\end{equation*}
	with
	$\mathfrak{Re}  = \int_{\Gamma}  \partial_{\textbf{n} }  u \bar{u}  \mathrm{d} x_1 + \Lambda \sum_{\vert n \vert>k}  \sqrt{n^2-{k} ^{2} }  \vert \hat{u} _n \vert^2$ and
	$ \mathfrak{Im}   = - \Lambda \sum_{\vert n \vert<k}  \sqrt{{k} ^{2} -n^2}   \vert \hat{u} _n \vert^2  $.
	Obviously,
	\begin{equation}\label{case2Im}
		\Lambda \sum_{\vert n \vert<k}  \sqrt{{k} ^{2} -n^2}   \vert \hat{u} _n \vert^2 = \lvert \mathfrak{Im}\rvert \leq \lvert  \int_{\Gamma} g \bar{u}  \mathrm{d} x_1 \rvert.
	\end{equation}
	As for the real part, since
	$$
	\lvert\mathfrak{Re}\rvert \leq \lvert  \int_{\Gamma} g \bar{u} \mathrm{d} x_1 \rvert \leq \pi\|g\|_A \|u\|_B \leq \gamma \pi \left\| \partial_{\nu}u  \right\|_A \|u\|_B
	$$
	provieded that 
	\begin{equation}\label{condcase2}
		\| g \|_A \leq \gamma \| \partial_{\nu}u  \|_A,
	\end{equation}
	and
	\begin{equation*}
		\begin{aligned}
		\mathfrak{Re} = &\sum_{n \in \mathbf{L}} \left( \partial_{\nu}u \right)_n \bar{u}_n + \sum_{n \in \mathbf{L}} \left( \partial_{\nu}v \right)_n \bar{v}_n + \sum_{n \in \mathbf{H}} \left( \partial_{\nu}u \right)_n  \bar{u}_n + \sum_{n \in \mathbf{H}} \left( \partial_{\nu}v \right)_n \bar{v}_n \\
		&+\Lambda \sum_{\vert n \vert>k}  \sqrt{n^2-{k} ^{2} }  \vert \hat{u} _n \vert^2,
		\end{aligned}
	\end{equation*}
	one obtains
	\begin{equation}\label{case2Re}
		\begin{aligned}
			&\gamma \pi \left\|  \partial_{\nu}u  \right\|_A \|u\|_B +  \lvert   \sum_{n \in \mathbf{L}} \left( \partial_{\nu}u \right)_n \bar{u}_n + \sum_{n \in \mathbf{L}} \left( \partial_{\nu}v \right)_n \bar{v}_n \rvert \\
			& \geq \sum_{n \in \mathbf{H}}\left(\partial_{\nu} u\right)_{n} \overline{u_{n}} + \sum_{n \in \mathbf{H}}\left(\partial_{\nu} v\right)_{n}\overline{v_{n}}  
			 + \Lambda \sum_{\vert n \vert>k}  \sqrt{n^2-{k} ^{2} }  \vert \hat{u} _n \vert^2,
		\end{aligned}
	\end{equation}
	with
	$$ \sum_{n \in \mathbf{H}}\left(\partial_{\nu} u\right)_{n} \overline{u_{n}} + \sum_{n \in \mathbf{H}}\left(\partial_{\nu} v\right)_{n}\overline{v_{n}} 
	 \simeq \left\| \mathcal{P}_\mathbf{H} \right\|_A^2 \geq 0$$
	and
	$$\Lambda \sum_{\vert n \vert>k}  \sqrt{n^2-{k} ^{2} }  \vert \hat{u} _n \vert^2 \geq 0.
	$$

	The estimate \eqref{case2Re} implies that  $\displaystyle\sum_{n \in \mathbf{H}}\left(\partial_{\nu} u\right)_{n} \overline{u_{n}} + \sum_{n \in \mathbf{H}}\left(\partial_{\nu} v\right)_{n}\overline{v_{n}}  $ and $ \Lambda \displaystyle\sum_{\vert n \vert>k}  \sqrt{n^2-{k} ^{2} }  \vert \hat{u} _n \vert^2$ can be bounded by $\lvert   \displaystyle\sum_{n \in \mathbf{L}} \left( \partial_{\nu}u \right)_n \bar{u}_n + \sum_{n \in \mathbf{L}} \left( \partial_{\nu}v \right)_n \bar{v}_n \rvert$ 
since $\gamma \pi \left\| \left( \partial_{\nu}u \right)_n  \right\|_A \|u\|_B$ may be absorbed by choosing an appropriate $\gamma$.

Evidently,  in order to estimate  $ \Lambda \displaystyle\sum_{\vert n \vert>k}  \sqrt{n^2-{k} ^{2} }  \vert \hat{u} _n \vert^2$, it is sufficient to estimate $
	\lvert   \displaystyle\sum_{n \in \mathbf{L}} \left( \partial_{\nu}u \right)_n \bar{u}_n + \sum_{n \in \mathbf{L}} \left( \partial_{\nu}v \right)_n \bar{v}_n \rvert\;.
	$
	Since the estimate \eqref{case2Re} is based on the condition \eqref{condcase2}, we will consider $\| g \|_A \leq \gamma \| \partial_{\nu}u  \|_A$ and $\| g \|_A \geq \gamma \| \partial_{\nu}u  \|_A$ separately. For the case $\| g \|_A \leq \gamma \| \partial_{\nu}u  \|_A$, since the term $
	\lvert   \displaystyle\sum_{n \in \mathbf{L}} \left( \partial_{\nu}u \right)_n \bar{u}_n + \sum_{n \in \mathbf{L}} \left( \partial_{\nu}v \right)_n \bar{v}_n \rvert
	$ concerns the low frequency part for ${u}_n$ and $\left( \partial_{\nu}u \right)_n$, to characterize which term is bigger, $\delta \left\| \mathcal{P}_\mathbf{L}  \right\|_A \geq  \left\|  \mathcal{V}_\mathbf{L}   \right\|_B$ and $\delta \left\| \mathcal{P}_\mathbf{L}  \right\|_A \leq  \left\|  \mathcal{V}_\mathbf{L}   \right\|_B$ are considered separately. 	
	To sum up, we consider the following three cases:
	\begin{itemize}
		\item Case (i)
		\begin{equation}\label{case2.1}
			\| g \|_A \leq \gamma \| \partial_{\nu}u  \|_A, \quad \delta \left\| \mathcal{P}_\mathbf{L}  \right\|_A \geq  \left\|  \mathcal{V}_\mathbf{L}   \right\|_B;
		\end{equation}
		\item Case (ii)
		\begin{equation}\label{case2.2}
			\| g \|_A \leq \gamma \| \partial_{\nu}u  \|_A, \quad \delta \left\| \mathcal{P}_\mathbf{L}  \right\|_A \leq  \left\|  \mathcal{V}_\mathbf{L}   \right\|_B.
		\end{equation}
	    \item Case (iii)
	    \begin{equation}\label{case1.1}
	    	\| g \|_A \geq \gamma \| \partial_{\nu}u  \|_A;
	    \end{equation}
	\end{itemize}
	The constant $\gamma$ and $\delta$ are small numbers which will be chosen later.

	For Case (i),
	it yields from \eqref{case2Re} that
	\begin{equation}\label{case2.1.1}
		\begin{aligned}
		&\delta \left\| \mathcal{P}_\mathbf{L}  \right\|_A^2  + \gamma \pi \left\| \left(  \partial_{\nu}u \right)_n  \right\|_A \|u\|_B  \\
		&\geq  \left(\sum_{n \in \mathbf{H}} \left( \partial_{\nu}u \right)_n  \bar{u}_n + \sum_{n \in \mathbf{H}}\left(\partial_{\nu} v\right)_{n}\overline{v_{n}}  \right)
		+\Lambda \sum_{\vert n \vert>k}  \sqrt{n^2-{k} ^{2} }  \vert \hat{u} _n \vert^2.
		\end{aligned}
	\end{equation}
	For the second term in \eqref{case2.1.1}, Case (i) and Lemma \ref{Lem:H} yield
	\begin{equation}\label{case2.1.2}
		\begin{aligned}
			\gamma \pi \left\| \left( \partial_{\nu}u \right)_n  \right\|_A \|u\|_B
			\leq & \gamma \pi \left( \left\| \mathcal{P}_\mathbf{L}  \right\|_A + C \left\| \mathcal{V}_\mathbf{H} \right\|_B \right) \times \left( \delta \left\| \mathcal{P}_\mathbf{L}  \right\|_A + C \left\| \mathcal{V}_\mathbf{H}  \right\|_B \right)\\
			\leq & C' \left( \gamma \delta \left\| \mathcal{P}_\mathbf{L}  \right\|_A^2 +  \gamma^2 \left\| \mathcal{P}_\mathbf{L}  \right\|_A^2 + \frac{1}{2} \left\| \mathcal{V}_\mathbf{H}  \right\|_B^2 \right).
		\end{aligned}
	\end{equation}
	Since
	\begin{equation}\label{case2.1frac12}
		\gamma^2 \| \partial_{\nu}u  \|_A^2
		\geq  \| \partial_{\nu}u -T(u) \|_A^2
		\geq  \left\| \mathcal{P}_\mathbf{L}  - \mathcal{T}_\mathbf{L}   \right\|_A^2
		\geq   \frac{1}{2} \left\| \mathcal{P}_\mathbf{L} \right\|_A^2 - \left\| \mathcal{T}_\mathbf{L} \right\|_A^2,
	\end{equation}
	one has
	\begin{equation}\label{case2.1.3}
		\frac{1}{2} \left\| \mathcal{P}_\mathbf{L} \right\|_A^2  \leq  \left\| \mathcal{T}_\mathbf{L} \right\|_A^2 + \gamma^2 \| \partial_{\nu}u  \|_A^2
		\leq \left\| \mathcal{T}_\mathbf{L} \right\|_A^2  + \gamma^2 \left( \left\| \mathcal{P}_\mathbf{L} \right\|_A^2 + C \left\| \mathcal{V}_\mathbf{H} \right\|_B^2 \right).
	\end{equation}
	By \eqref{case2.1.1}, \eqref{case2.1.2} and \eqref{case2.1.3},
	choosing
	\begin{equation}\label{gamma}
		\gamma=C_{(i)}\sqrt{\delta},
	\end{equation}
	where $C_{(i)}$ is an  arbitrarily sufficiently small constant independent of $k$ and $b$, it can be deduced  that
	\begin{equation}\label{Tu_L}
		\left\| \mathcal{T}_\mathbf{L} \right\|_A^2 \geq \dfrac{C_1}{\delta} \left( \left\| \mathcal{V}_\mathbf{H} \right\|_B^2 + \Lambda \sum_{\vert n \vert>k}  \sqrt{n^2-{k} ^{2} }  \vert \hat{u} _n \vert^2 \right).
	\end{equation}
	
	In order to get a bound for $\left\| \mathcal{T}_\mathbf{L} \right\|_A^2$, it is sufficient to estimate $(Tu)_n$ and $(Tv)_n$ for $n\leq k$ due to the definition of $\mathbf{L}$.
	By the Cauchy-Schwarz inequality, one has
	\begin{equation}
		\begin{aligned}
			&  \sum_{\vert n \vert<k}  \sqrt{{k} ^{2} -n^2}   \vert \hat{u} _n \vert^2  +   \sum_{\vert n \vert>k}  \sqrt{n^2-{k} ^{2} }  \vert \hat{u} _n \vert^2 \\
			\geq & \lvert  \sum_{n\in \mathbb{Z} }  \sqrt{{k} ^{2} -n^2}   \hat{u} _n \overline{\widehat{\sin(n x_1)} } \rvert^2 \left( \sum_{n\in \mathbb{Z} }  \sqrt{\vert{k} ^{2} -n^2\vert}   \lvert \widehat{\sin(n x_1)} \rvert^2   \right)^{-1}  \\
			\geq & \lvert (Tu)_n \rvert^2 \left( \sum_{n\in \mathbb{Z} }  \sqrt{\vert{k} ^{2} -n^2\vert}   \lvert \widehat{\sin(n x_1)} \rvert^2 \right)^{-1}  \\
			\geq & \frac{1} {2k}  \lvert (Tu)_n \rvert^2 \quad n < k,
		\end{aligned}
	\end{equation}
	and
	\begin{equation}  \label{CSv}
		\begin{aligned}
			& \sum_{\vert n \vert<k}  \sqrt{{k} ^{2} -n^2}   \vert \hat{u} _n \vert^2  +   \sum_{\vert n \vert>k}  \sqrt{n^2-{k} ^{2} }  \vert \hat{u} _n \vert^2 \\
			\geq & \lvert  \sum_{n\in \mathbb{Z} }  \sqrt{{k} ^{2} -n^2} \hat{u} _n \overline{\widehat{\cos(n x_1)} }   \rvert^2 \left( \sum_{n\in \mathbb{Z} }  \sqrt{\vert{k} ^{2} -n^2\vert}  \lvert \widehat{\cos(n x_1)} \rvert^2   \right)^{-1}  \\
			\geq & \lvert (Tv)_n \rvert^2 \left( \sum_{n\in \mathbb{Z} }  \sqrt{\vert{k} ^{2} -n^2\vert}  \lvert \widehat{\cos(n x_1)} \rvert^2 \right)^{-1}  \\
			\geq & \frac{1} {2k}  \lvert (Tv)_n \rvert^2 \quad n < k.
		\end{aligned}
	\end{equation}
	
	Note that
	\begin{equation}\label{LCy0}
	\begin{aligned}
		\sum_{n\in\mathbf{L}} \frac{1}{\sqrt{\lvert k^{2}-n^{2}\rvert}+\frac{1}{b}}  &= \sum_{n=0}^{\lfloor k \rfloor-1}\frac{1}{\sqrt{\lvert k^{2}-n^{2}\rvert}+\frac{1}{b}} +  \frac{1}{\sqrt{\lvert k^{2}-\lfloor k \rfloor^2  \rvert}+\frac{1}{b}} \\
		&\leq \displaystyle\int_0^{\lfloor k \rfloor} \frac{1}{\sqrt{\lvert k^{2}-x^{2}\rvert}+\frac{1}{b}} \mbox{d}x + b \\
		&\leq C + b \leq {C_2}b
	\end{aligned}	
	\end{equation}
	since $b>1$.
	A combination of the above estimate \eqref{LCy0} with \eqref{Tu_L}-\eqref{CSv} yields
	\begin{equation} \label{case2.1leqgeq}
		\begin{aligned}
			& C_2  b k \left(  \Lambda \sum_{\vert n \vert<k}  \sqrt{{k} ^{2} -n^2}   \vert \hat{u} _n \vert^2  +  \Lambda \sum_{\vert n \vert>k}  \sqrt{n^2-{k} ^{2} }  \vert \hat{u} _n \vert^2 \right) \\
			\geq & \sum_{n\in\mathbf{L} }  \frac{1} {\sqrt{\lvert {k} ^{2} -n^{2} \rvert} +\frac{1} {b} }  \left( \lvert (Tu)_n \rvert^2 + \lvert (Tv)_n \rvert^2  \right) \\
			= & \left\| \mathcal{T}_\mathbf{L}  \right\|_A^2 \\
			\geq & \frac{C_1} {\delta}  \left( \left\|\mathcal{V}_\mathbf{H}\right\|_B^2 +  \Lambda \sum_{\vert n \vert>k}  \sqrt{n^2-{k} ^{2} }  \vert \hat{u} _n \vert^2 \right) \\
			\geq & \frac{C_1} {\delta}  \Lambda \sum_{\vert n \vert>k}  \sqrt{n^2-{k} ^{2} }  \vert \hat{u} _n \vert^2.
		\end{aligned}
	\end{equation}
	By choosing
	\begin{equation}\label{delta}
		\delta = \frac{C_{(ii)}}{ b k},
	\end{equation}
	with $C_{(ii)} = \frac{2 C_1}{1 + 2 C_2}>0$, which is independent of $k$ and $\varepsilon$, one has
	\begin{equation}\label{ineq:Cnk}
		C  \sum_{\vert n \vert<k}  \sqrt{{k} ^{2} -n^2}   \vert \hat{u} _n \vert^2  \geq \sum_{\vert n \vert>k}  \sqrt{n^2-{k} ^{2} }  \vert \hat{u} _n \vert^2.
	\end{equation}
	Applying the above estimate \eqref{ineq:Cnk}  and the choice of $\delta$ \eqref{delta} to \eqref{case2.1leqgeq}, one has
	\begin{equation}\label{ineq:bk}
		C  \sum_{\vert n \vert<k}  \sqrt{{k} ^{2} -n^2}   \vert \hat{u} _n \vert^2 \geq  \left\|\mathcal{V}_\mathbf{H}\right\|_B^2 +  \Lambda \sum_{\vert n \vert>k}  \sqrt{n^2-{k} ^{2} }  \vert \hat{u} _n \vert^2.
	\end{equation}
	By combining \eqref{case2Im}, \eqref{ineq:bk}, Lemma \ref{Lem:sqrtk} and the proposition \eqref{ineq:normB} of norm $B$, one gets
	\begin{equation*}
		\begin{aligned}
			C  k \lvert  \int_{\Gamma}  g \bar{u}  \mathrm{d} x_1\rvert
			\geq &  k \left\| \mathcal{V}_\mathbf{H} \right\|_B^2 + C'' \sqrt{\varepsilon}   k \sqrt{k}  \|u\|_{L^2(\Gamma)} ^2
			\geq  k \left\| \mathcal{V}_\mathbf{H} \right\|_B^2 + \sqrt{\varepsilon}   \sqrt{k}   \left\| \mathcal{V}_\mathbf{L} \right\|_B^2
			\geq  \sqrt{\varepsilon}   \sqrt{k}   \|u\|_B^2.
		\end{aligned}
	\end{equation*}
	Hence,
	\begin{equation}\label{case21uB}
		\dfrac{ C \sqrt{k} } {\sqrt{\varepsilon} }  \|g\|_A \geq \|u\|_B.
	\end{equation}
	
	Moreover, since Lemma \ref{Lem:H} indicates
	\begin{equation}\label{case2.1.H}
		\left\| \mathcal{V}_\mathbf{H} \right\|_B^2 \geq C \left\| \mathcal{P}_\mathbf{H} \right\|_A^2,
	\end{equation}
	and \eqref{case2.1frac12} implies
	\begin{equation}\label{case2.1frac12.2}
		\left\| \mathcal{T}_\mathbf{L} \right\|_A^2 \geq \frac{1}{2} \left\| \mathcal{P}_\mathbf{L} \right\|_A^2 - \gamma^2 \| \partial_{\nu}u  \|_A^2,
	\end{equation}
	a combination of the above \eqref{case2.1.H}-\eqref{case2.1frac12.2} with \eqref{case2Im},  \eqref{case2.1leqgeq} and Lemma \ref{Lem:H} yields
	\begin{equation*}
		\begin{aligned}
			C b k \pi \|g\|_A \|u\|_B
			\geq  C b k \lvert \int_{\Gamma} g \bar{u} \mathrm{d} x_1\rvert
			\geq  C b k \Lambda \sum_{\vert n \vert<k}  \sqrt{{k} ^{2} -n^2}   \vert \hat{u} _n \vert^2
			\geq   \sqrt{\varepsilon} \left\| \partial_{\nu}u \right\|_A^2.
		\end{aligned}
	\end{equation*}
	Therefore, using \eqref{case21uB}, one has
	\begin{equation*}
		\dfrac{C b {k} ^{\frac{3} {2} }     } {\varepsilon } \|g\|_A^2 \geq \left\| \partial_{\nu}u \right\|_A^2.
	\end{equation*}
	Hence,
	\begin{equation}\label{case21puA}
		\dfrac{C b^{\frac{1} {2} } {k} ^{\frac{3} {4} }     } {\sqrt{\varepsilon} } \|g\|_A \geq \left\| \partial_{\nu}u \right\|_A.
	\end{equation}
	The estimate \eqref{case21uB} and \eqref{case21puA} indicate that
	\begin{equation}\label{case21result}
		\dfrac{C b^{\frac{1} {2} } {k} ^{\frac{3} {4} }     } {\sqrt{\varepsilon} } \|g\|_A \geq \left\| \partial_{\nu}u \right\|_A + \|u\|_B.
	\end{equation}
	
	For Case (ii),
	Applying Lemma \ref{Lem:H} to \eqref{case2Re}, one has
	\begin{equation}\label{case2.2.0.1}
		\frac{1}{\delta} \left\| \mathcal{V}_\mathbf{L} \right\|_B^2 + \gamma\pi \| \partial_{\nu}u \|_A \|u\|_B  \geq C \left\| \mathcal{V}_\mathbf{H} \right\|_B^2 + \Lambda \sum_{\vert n \vert>k}  \sqrt{n^2-{k} ^{2} }  \vert \hat{u} _n \vert^2.
	\end{equation}
	For the second term in \eqref{case2.2.0.1}, Case (ii) and Lemma \ref{Lem:H} yield
	\begin{equation}\label{case2.2.0.2}
		\begin{aligned}
			\gamma\pi \|\partial_{\nu}u \|_A \|u\|_B
			\leq & \gamma\pi \left( \frac{1}{\delta} \left\| \mathcal{V}_\mathbf{L} \right\|_B + C \left\| \mathcal{V}_\mathbf{H} \right\|_B \right)  \times \left( \left\| \mathcal{V}_\mathbf{L} \right\|_B + \left\| \mathcal{V}_\mathbf{H} \right\|_B \right) \\
			\leq & C \frac{\gamma}{\delta} \left\| \mathcal{V}_\mathbf{L} \right\|_B^2 + C \gamma^2 \left\| \mathcal{V}_\mathbf{L} \right\|_B^2 + \frac{1}{2}\left\| \mathcal{V}_\mathbf{H} \right\|_B^2.
		\end{aligned}
	\end{equation}
	Since $\gamma=C_{(i)}\sqrt{\delta}$ is sufficiently small, the estimate \eqref{case2.2.0.1} and \eqref{case2.2.0.2} indicate that
	\begin{equation}\label{case2.2.0}
		\frac{C}{\delta} \left\| \mathcal{V}_\mathbf{L} \right\|_B^2
		\geq \left\| \mathcal{V}_\mathbf{H} \right\|_B^2 + \Lambda \sum_{\vert n \vert>k}  \sqrt{n^2-{k} ^{2} }  \vert \hat{u} _n \vert^2  \geq \left\| \mathcal{V}_\mathbf{H} \right\|_B^2.
	\end{equation}
	On the other hand, for the lower frequency, it is obvious that
	\begin{equation*}
		C   \Lambda \sum_{\vert n \vert<k}  \sqrt{{k} ^{2} -n^2}   \vert \hat{u} _n \vert^2  \geq \left\| \mathcal{V}_\mathbf{L} \right\|_B^2.
	\end{equation*}
	Hence, it follows from \eqref{case2Im} that
	\begin{equation*}
		C  \lvert \int_{\Gamma}  g \bar{u}  \mathrm{d} x_1\rvert  \geq C   \Lambda \sum_{\vert n \vert<k}  \sqrt{{k} ^{2} -n^2}   \vert \hat{u} _n \vert^2   \geq \left\| \mathcal{V}_\mathbf{L} \right\|_B^2.
	\end{equation*}
	Thus
	\begin{equation}\label{case2.2.1}
		C   \|g\|_A \|u\|_B \geq \left\| \mathcal{V}_\mathbf{L} \right\|_B^2.
	\end{equation}
	By \eqref{case2.2.0} and $\delta=\dfrac{C_{(ii)}}{b k}$, one has
	\begin{equation*}
		C  b k \|g\|_A \|u\|_B \geq \left\| u \right\|_B^2.
	\end{equation*}
	Thus
	\begin{equation}\label{case2.2.2}
		C  b k \|g\|_A \geq \left\| u \right\|_B.
	\end{equation}
	
	Note that
	\begin{equation}\label{case2.2.3}
		\|u\|_B \geq \left\| \mathcal{V}_\mathbf{L} \right\|_B \geq \dfrac{C_{(ii)}}{ bk} \left\| \mathcal{P}_\mathbf{L}  \right\|_A,
	\end{equation}
	due to the assumption of Case (ii).
	Applying \eqref{case2.2.3} to \eqref{case2.2.1}, one gets
	\begin{equation*}
		C  b^2 {k} ^2\|g\|_A \|u\|_B \geq \left\| \mathcal{P}_\mathbf{L}  \right\|_A^2.
	\end{equation*}
	Using \eqref{case2.2.0} and Lemma \ref{Lem:H}, this implies that
	\begin{equation}\label{case2.2.4}
		C b^2  {k} ^2 \|g\|_A \|u\|_B \geq \|\partial_{\nu}u \|_A^2.
	\end{equation}
	Combining the above estimate \eqref{case2.2.4} with \eqref{case2.2.2} yields
	\begin{equation}\label{case2.2.5}
		C   b^{\frac{3} {2} } {k} ^{\frac{3} {2} }  \|g\|_A \geq \|\partial_{\nu}u \|_A.
	\end{equation}
	The estimate \eqref{case2.2.2} and \eqref{case2.2.5} indicate that
	\begin{equation}\label{case22result}
		C b^{\frac{3} {2} } {k} ^{\frac{3} {2} }    \|g\|_A \geq \|\partial_{\nu}u \|_A + \|u\|_B.
	\end{equation}

	For Case (iii), it suffices to estimate $\|u\|_B$.
	Using the definition of $T(u)$, the property of the norm defined above and Lemma \ref{Lem:sqrtk},
	one obtains
	\begin{equation} \label{ineq1}
		\begin{aligned}
			\left( 1+\frac{1} {\gamma}  \right)  \sqrt{k}  \pi \|g\|_A \|u\|_B
			& \geq \sqrt{k}  \pi \|T(u)\|_A \|u\|_B \\
			& \geq  \sqrt{k}  \lvert  		\Lambda \sum_{n\in \mathbb{Z} }  \sqrt{{k} ^{2} -n^2}  \vert \hat{u} _n \vert^2 		\rvert
			\left( \geq \sqrt{k}  \lvert
			\frac{\Lambda} {2}  \sum_{n\in \mathbb{Z} }  \sqrt{\vert {k} ^{2} -n^2\vert}  \vert \hat{u} _n \vert^2
			\rvert \right) \\
			& \geq \dfrac{\sqrt{\varepsilon} } {2}  k\|u\|_{L^2(\Gamma)} ^2.
		\end{aligned}
	\end{equation}
	Since $k \geq \sqrt{k^2-n^2}$  for $n\in \mathbf{L}$, it is obvious that there exists a constant $C$ such that
	\begin{equation}\label{kinL}
		Ck \geq \sqrt{\lvert k^{2}-n^{2}\rvert}+ \frac{1}{b}.
	\end{equation}
	Applying  \eqref{kinL} to \eqref{ineq1}, the bound for $n\in\mathbf{L}$ can be deduced:
	\begin{equation}\label{uL}
		C \dfrac{\sqrt{k}  } {\sqrt{\varepsilon}  \gamma}  \|g\|_A \|u\|_B
		\geq \sum_{n \in \mathbf{L}} \left(\sqrt{\lvert k^{2}-n^{2}\rvert}+ \frac{1}{b}\right) \left( \vert u_n \vert^2 + \vert v_n \vert^2 \right) = \left\| \mathcal{V}_\mathbf{L} \right\|_B^2.
	\end{equation}
	Using Lemma \ref{Lem:H}, the bound for $n\in \mathbf{H}$ can be deduced:
	\begin{equation}\label{uH}
		\frac{1}{\gamma^2} \|g\|_A^2 \geq \| \partial_{\nu}u  \|_A^2 \geq \left\| \mathcal{P}_\mathbf{H} \right\|_A^2 \simeq \left\| \mathcal{V}_\mathbf{H} \right\|_B^2.
	\end{equation}
	Combining \eqref{uL} and \eqref{uH} yields
	\begin{equation*}
		C \left( \frac{1}{\gamma^2} \|g\|_A^2+\dfrac{\sqrt{k}  } {\sqrt{\varepsilon}  \gamma}\|g\|_A \|u\|_B \right) \geq \|u\|_B^2.
	\end{equation*}
	Solving the second order polynomial above, one has
	\begin{equation}\label{case1.2}
		C  \dfrac{\sqrt{k}  } {\sqrt{\varepsilon}  \gamma}  \|g\|_A \geq \|u\|_B.
	\end{equation}
	Therefore, for Case (i), one obtains from \eqref{case1.1} and \eqref{case1.2} that
	\begin{equation}\label{case1result}
		C  \dfrac{\sqrt{k}  } {\sqrt{\varepsilon} \gamma} \|g\|_A \geq \| \partial_{\nu}u  \|_A + \|u\|_B.
	\end{equation}	
	Applying choice of $\gamma$ \eqref{gamma} and $\delta$ \eqref{delta} to the estimate \eqref{case1result} immediately yields that
	\begin{equation} \label{case1resultnew}
		C  \dfrac{ b^{\frac{1} {2} } k } {\sqrt{\varepsilon} }  \|g\|_A \geq \| \partial_{\textbf{n} } u  \|_A + \|u\|_B.
	\end{equation}

	Finally, we complete the proof of Lemma \ref{lem:step2} based on the above proofs for Case (i)-(iii).
	Combining the bounds  \eqref{case21result} , \eqref{case22result} and \eqref{case1resultnew} for Case (i)-(iii) respectively, one arrives
	\begin{equation*}
		\| \partial_{\textbf{n} } u \|_A + \|u\|_B \leq C \max \left\{ \dfrac{ b^{\frac{1} {2} } k  } {\sqrt{\varepsilon} }
		, b^{\frac{3} {2} } k ^{\frac{3} {2} }   \right\}   \|g\|_A,
	\end{equation*}
	which completes the proof.
\end{proof}

\subsection{Proof of Theorem \ref{thm:1}}
\label{sec:proofthm1}

Now we are ready to prove the bounds in Theorem \ref{thm:1}.

\begin{proof}
	Lemma \ref{lem:step2} yields that: Under Assumption \ref{assump:k},
	there exists a constant $C$ independent of $b>1$ and large $k$, such that
	\begin{equation*}
		\| \partial_{\nu}u \|_A + \|u\|_B \leq C \max \left\{ \dfrac{  b^{\frac{1} {2} } k } {\sqrt{\varepsilon} }
		,  b^{\frac{3} {2} } {k} ^{\frac{3} {2} }    \right\}  \|g\|_A .
	\end{equation*}
	Furthermore, Lemma \ref{lem:step1} gives
	\begin{equation*}
		\|\nabla u \|_{L^2(D)} + k \|u\|_{L^2(D)}  \leq C  b k \left( \| \partial_{\nu}u \|_A + \|u\|_B \right) .
	\end{equation*}
	Therefore,
	\begin{equation}\label{eq:2.1}
		\| \partial_{\nu}u \|_A + \|u\|_B \leq C  \max \left\{ \dfrac{ b^{\frac{3} {2} } {k} ^2  } {\sqrt{\varepsilon} }
		,  b^{\frac{5} {2} }  {k} ^{\frac{5} {2} }   \right\} \|g\|_A .
	\end{equation}
	From the definition and the property of the norm $A$ mentioned in Definition \ref{def:AB} and Remark \ref{rem:AL},
	\begin{equation}\label{eq:2.2}
		\|g\|_A \leq C \sqrt{b} \|g\|_{L^2(\Gamma)} .
	\end{equation}
	
	Applying \eqref{eq:2.2} to \eqref{eq:2.1}, one derives the bounds in Theorem \ref{thm:1}, which completes the proof.
\end{proof}

\section{Stability for the random case}
\label{sec:thm23}

In this section, Theorem \ref{thm:3} is proved by  utilizing the general framework developed in  \cite{Pembery2020} based upon the stability in Theorem \ref{thm:1} for the deterministic case.
Note that the randomness of integral domain prevents direct application of general framework in  \cite{Pembery2020}.
Thus the difficulty lies in that the scattering surface in model problem is
with randomness, which implies the stochastic domain for the scattering problem.
For this, a variable transform, which changes the random variational form with stochastic domains into a transformed one with a definite domain and random medium, is introduced to prove all necessary propositions, including the continuity of  sesquilinear form $\tilde{a}$ and antilinear functional $\tilde{G}$, and stability and uniqueness which hold almost surely, which implies the stability for the random case.


\subsection{Prerequisites}
\label{sec:4.preliminary}

Denote $\tilde{a}_{c(\omega)}$ and $\tilde{G}_{c(\omega)}$ (defined by \eqref{aw} and \eqref{Gw} in Sec \ref{sec:variational}) by $(\tilde{a} \circ c)(\omega)$ and $(\tilde{G} \circ c)(\omega)$, respectively.
Define the norm $\|v\|^2_{1,k} := \|\nabla v \|_{L^2(D)}^2 + k^2 \|v\|_{L^2(D)}^2$ and $\|v\|_{1,\infty}:=\|v\|_{\infty}+\|v'\|_{\infty}$ on $H_{S,qp}^1(D)$.
Necessary propositions required in the general framework \cite{Pembery2020} include the continuity of $\tilde{a}$ and $\tilde{G}$, regularity of $\tilde{a}\circ c$ and $\tilde{G}\circ c$, measurability and $\mu$-essentially separability of $c$, stability a.s. and uniqueness a.s., described in Proposition \ref{prop:1} and Proposition \ref{prop:2}.

\begin{proposition}
	\label{prop:1}
	$\tilde{a}$, $\tilde{G}$ and $c$ in the variational form \eqref{variational} of the scattering problem in a random periodic structure satisfies the following properties:
	\begin{itemize}
		\item[(i)] The function $c: \Omega \rightarrow \mathcal{C}$ defined by \eqref{def:c} is measurable and $\mu$-essentially separably valued.
		\item[(ii)]
		Let $B(H_{S,qp}^1(D), H_{S,qp}^1(D))$ denote the space of bounded linear maps $H_{S,qp}^1(D) \rightarrow H_{S,qp}^1(D)$. The functions $\tilde{a}: \mathcal{C}\rightarrow B(H_{S,qp}^1(D),H_{S,qp}^1(D))$ and $\tilde{G}:\mathcal{C}\rightarrow H_{S,qp}^1(D)$ defined by \eqref{aw} and \eqref{Gw} are continuous,
		the maps $\tilde{a}\circ c\in L^{\infty}(\Omega;B(H_{S,qp}^1(D),H_{S,qp}^1(D)))$, $\tilde{G}\circ c\in L^2(\Omega;H_{S,qp}^1(D))$.
	\end{itemize}
\end{proposition}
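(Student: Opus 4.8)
Part (i) is a direct consequence of the Bochner--Pettis machinery: by Assumption~\ref{cond:f} we have $f\in L^2(\Omega;Lip)$, so $f$ is a strongly ($\mu$-)measurable $\mathcal{C}$-valued map, and since $c=f$ by~\eqref{def:c}, the same is true of $c$. By the Pettis measurability theorem, strong measurability is equivalent to weak measurability together with $\mu$-essential separable-valuedness, so $c$ is in particular measurable and $\mu$-essentially separably valued. (The qualifier ``essentially'' cannot be dropped, since $\mathcal{C}=Lip$ with the norm $\|\cdot\|_{1,\infty}$ is nonseparable; but a strongly measurable map is an a.e.\ limit of simple functions and hence, off a $\mu$-null set, takes values in a separable closed subspace.)

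For (ii) the real issue --- and where I would concentrate --- is that $\tilde a_c$ and the energy space $H^1_{S,qp}(D)$ both depend on $c$ through the domain $D=D_c=\{f(x_1)<x_2<b\}$, so ``$c\mapsto\tilde a_c$ is continuous'' is not even well posed until these spaces are identified. I would therefore first introduce the flattening transform: fix the reference rectangle $\hat D=(0,\Lambda)\times(0,1)$ with bottom $\hat S=\{\hat x_2=0\}$ and top $\hat\Gamma=\{\hat x_2=1\}$, and for $c\in\mathcal{C}$ set
\[
\Phi_c(\hat x_1,\hat x_2)=\bigl(\hat x_1,\ c(\hat x_1)(1-\hat x_2)+b\,\hat x_2\bigr),
\]
a bi-Lipschitz map of $\hat D$ onto $D_c$ that fixes $\hat x_1$, is $\Lambda$-periodic in $\hat x_1$ (since $f$ is $\Lambda$-periodic), and maps $\hat S$ onto $S$ and $\hat\Gamma$ identically onto $\Gamma$. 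Since $b>\max_{\omega,x_1}f$, the Jacobian $J_c:=\det D\Phi_c=b-c$ is bounded below by a positive constant uniformly in $\omega$, so $v\mapsto\hat v:=v\circ\Phi_c$ is an isomorphism $H^1_{S,qp}(D_c)\to H^1_{\hat S,qp}(\hat D)$, and I would use it to identify all of these with the fixed space $H^1_{\hat S,qp}(\hat D)$. Pulling~\eqref{aw} and~\eqref{Gw} back through $\Phi_c$,
\[
\tilde a_{c}(u,v)=\int_{\hat D}(A_c\nabla\hat u)\cdot\nabla\overline{\hat v}-k^2\int_{\hat D}J_c\,\hat u\,\overline{\hat v}-\int_{\hat\Gamma}T\tau\hat u\,\overline{\tau\hat v},\qquad \tilde G_{c}(v)=\int_{\hat\Gamma}g\,\overline{\tau\hat v},
\]
with $A_c=J_c\,(D\Phi_c)^{-1}(D\Phi_c)^{-\top}$; the $\Gamma$-term is literally $c$-independent because $\Phi_c$ fixes $\Gamma$, and $\tilde G_c$ does not depend on $c$ either.

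Next I would check continuity of $\tilde a$ and $\tilde G$ on $\mathcal{C}$. The entries of $A_c$ and of $J_c$ are explicit rational expressions in $c$ and $c'$ with denominators equal to powers of $b-c$; since $b-c$ is bounded below uniformly and $c\mapsto(c,c')$ embeds $(\mathcal{C},\|\cdot\|_{1,\infty})$ bi-Lipschitzly into $L^\infty(0,\Lambda)^2$, the maps $c\mapsto A_c$ and $c\mapsto J_c$ are Lipschitz on bounded subsets of $\mathcal{C}$ into $L^\infty(\hat D)^{2\times2}$ and $L^\infty(\hat D)$. A sesquilinear form of the above type satisfies $\|\tilde a_{c_1}-\tilde a_{c_2}\|\le C\bigl(\|A_{c_1}-A_{c_2}\|_{L^\infty}+k^2\|J_{c_1}-J_{c_2}\|_{L^\infty}\bigr)$ in the operator norm on $B(H^1_{\hat S,qp}(\hat D))$, which gives locally Lipschitz, hence continuous, dependence of $\tilde a$ on $c$; continuity of $\tilde G$ is immediate since it is independent of $c$. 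For the compositions: by (i), $c$ is strongly $\mu$-measurable, so $\tilde a\circ c$ and $\tilde G\circ c$ are strongly $\mu$-measurable as compositions of continuous maps with a strongly measurable map. Finally, the pointwise bound $\|\tilde a_{c(\omega)}\|\le C\,P(\|c(\omega)\|_{1,\infty})$ for a fixed polynomial $P$ whose coefficients depend only on $k$, $b$, and $\operatorname{ess\,inf}_\omega(b-c(\omega))$, together with the essential boundedness of $\|c(\omega)\|_{1,\infty}$, gives $\tilde a\circ c\in L^\infty(\Omega;B(H^1_{\hat S,qp}(\hat D)))$, while $\|\tilde G_{c(\omega)}\|_{1,k}\le C\|g\|_{L^2(\Gamma)}$ and $g\in L^2(\Omega;L^2(\Gamma))$ give $\tilde G\circ c\in L^2(\Omega;H^1_{\hat S,qp}(\hat D))$.

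\textbf{Main obstacle.} The crux is the first step of (ii): making ``$c\mapsto\tilde a_c$'' meaningful at all, since the domain, the energy space, and the bilinear form all move with the random realization. The flattening diffeomorphism $\Phi_c$ converts this into a problem on a fixed domain with $c$-dependent but algebraically explicit coefficients, after which continuity is a routine perturbation estimate and measurability/integrability follow from (i) and Pettis. The one delicate point remaining is the uniform-in-$\omega$ lower bound $b-c(\omega)\ge\mathrm{const}>0$ and the essential boundedness of $\|c(\omega)\|_{1,\infty}$, which are exactly what upgrade the almost-sure bounds to the $L^\infty(\Omega;\cdot)$ statement.
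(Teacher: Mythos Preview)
Your proposal is correct and follows the same core idea as the paper: resolve the moving-domain problem by a change of variables that pulls the forms back to a fixed domain with $c$-dependent coefficients, then read off continuity from the Lipschitz dependence of those coefficients on $(c,c')$, and finally get measurability of the compositions from Pettis plus continuity. The implementation differs, however. The paper uses a \emph{perturbative} transform $\mathcal{F}:D_{f_0}\to D_{f_m}$ of Kirsch type, built with a cutoff $\alpha(y_2-f_0(y_1))$ so that $\mathcal{F}=\mathcal{I}$ away from the surface and $J_{\mathcal{F}}=\mathcal{I}+\mathcal{O}(\|f_m-f_0\|_{1,\infty})$; this compares $\tilde a_{f_m}$ to $\tilde a_{f_0}$ directly and yields the local Lipschitz estimate $|\tilde a_{f_m}(v,\phi)-\tilde a_{f_0}(v,\phi)|\le C\|f_m-f_0\|_{1,\infty}\|v\|\|\phi\|$ immediately. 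You instead use a \emph{global} flattening $\Phi_c:\hat D\to D_c$ to a fixed rectangle, which has the conceptual advantage of identifying all the $c$-dependent spaces with a single reference space $H^1_{\hat S,qp}(\hat D)$ once and for all, making ``$c\mapsto\tilde a_c$'' globally well posed rather than only locally near a base point $f_0$. Both routes are standard and lead to the same conclusions; yours is arguably tidier for the $L^\infty(\Omega;\cdot)$ statement since the coefficients $A_c,J_c$ are explicit rational functions of $(c,c')$, while the paper's route interfaces more directly with the perturbation result of Kirsch used elsewhere (Lemma~\ref{lem:Kirsch}). Your closing caveat about needing a uniform lower bound on $b-c(\omega)$ and essential boundedness of $\|c(\omega)\|_{1,\infty}$ is apt; the paper's argument for $\tilde a\circ c\in L^\infty$ relies on the same unstated uniformity.
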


\begin{definition}
	(The solution operator $\mathcal{U}$)
	Define $\mathcal{U}:\mathcal{C}\rightarrow H_{S,qp}^1(D)$ by letting $\mathcal{U}(f_0) \in H_{S,qp}^1(D)$ be the solution of the deterministic Helmholtz problem with sampling $f_0\in\mathcal{C}$.
\end{definition}

\begin{proposition} \label{prop:2}
	For $f_0\in Lip$,  the solution of the variational form \eqref{variational} exists and is unique.
	Let $v_0=\mathcal{U}(f_0)$ be the solution. Under Assumption \ref{assump:k}, there exists a constant $C$ independent of  $b>1$ and large $k$, such that
	\begin{equation}\label{Destimatelem}
		\|\nabla v_0 \|_{L^2(D)} + k \|v_0\|_{L^2(D)} \leq C \max \left\{ \dfrac{b^2 {k} ^{2} } {\sqrt{\varepsilon} } ,b^3 {k} ^{\frac{5} {2} }   \right\}  \|g\|_{L^2(\Gamma)}.
	\end{equation}
	Moreover, the scattering problem \eqref{eq:u_total} in a random periodic structure satisfies the following stability and uniqueness properties:
	\begin{itemize}
		\item[(i)] (Uniqueness almost surely) $ker(\tilde{a}_{c(\omega)})=\{0\}$ $\mu$-almost surely.
		\item[(ii)] (Stability almost surely) There exist $C_1, h_1:\Omega\rightarrow\mathbb{R}$ such that $C_1 h_1\in L^1(\Omega)$ and the bound
		\begin{equation}
			\|u(\omega)\|_{1,k}^2\leq C_1(\omega) h_1(\omega)
		\end{equation}
		holds almost surely.
	\end{itemize}
\end{proposition}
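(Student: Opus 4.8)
The plan is to reduce Proposition~\ref{prop:2} to the deterministic theory of Section~\ref{sec:thm1}, applied sample by sample, and then to package the resulting uniform-in-$\omega$ conclusions into the measure-theoretic statements. Fix $f_0\in Lip$; write $D_{f_0}$ for the associated bounded domain and $a_0:=\tilde a_{f_0}$. Since $a_0$ contains the indefinite term $-k^2\int_{D_{f_0}}u\bar v$, existence will come from the Fredholm alternative. The key structural fact is the sign of the transparent-boundary term: with $\tau u=\sum_n\hat u_n e^{i\alpha_n x_1}$ on $\Gamma$ one has $-\int_\Gamma T\tau u\,\overline{\tau u}=-\mathrm i\Lambda\sum_{|\alpha_n|\le k}\beta_n|\hat u_n|^2+\Lambda\sum_{|\alpha_n|>k}\sqrt{\alpha_n^2-k^2}\,|\hat u_n|^2$, so that $\mathrm{Re}\,a_0(u,u)+(k^2+1)\|u\|_{L^2(D_{f_0})}^2=\|\nabla u\|_{L^2(D_{f_0})}^2+\|u\|_{L^2(D_{f_0})}^2+\Lambda\sum_{|\alpha_n|>k}\sqrt{\alpha_n^2-k^2}\,|\hat u_n|^2\ge\|u\|_{H^1(D_{f_0})}^2$. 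Hence $a_0+(k^2+1)(\cdot,\cdot)_{L^2(D_{f_0})}$ is bounded (by the continuity recorded in Proposition~\ref{prop:1}(ii)) and $\mathrm{Re}$-coercive on $H^1_{S,qp}(D_{f_0})$, so it induces an isomorphism; because $f_0$ is Lipschitz, $D_{f_0}$ is a bounded Lipschitz domain and $H^1_{S,qp}(D_{f_0})\hookrightarrow L^2(D_{f_0})$ is compact, so the operator induced by $a_0$ is a compact perturbation of an isomorphism, i.e.\ Fredholm of index zero.

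Uniqueness then comes for free from the conditional estimate of Theorem~\ref{thm:1}. If $u\in H^1_{S,qp}(D_{f_0})$ solves $a_0(u,v)=0$ for all $v\in H^1_{S,qp}(D_{f_0})$, then $u$ is a weak solution of~\eqref{eq:u_deter} with $g\equiv0$; since the bound~\eqref{Destimate} is proved for any solution of the scattering problem, it yields $\|\nabla u\|_{L^2(D_{f_0})}+k\|u\|_{L^2(D_{f_0})}\le 0$, hence $u=0$. Combined with the index-zero Fredholm property, this gives existence and uniqueness of $v_0=\mathcal U(f_0)$ for every $f_0\in Lip$ under Assumption~\ref{assump:k}, and the quantitative bound~\eqref{Destimatelem} is just~\eqref{Destimate} applied to $v_0$. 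I emphasize that the constant $C$ in Theorem~\ref{thm:1} depends only on $b$, $k$ and $\varepsilon$, not on $f_0$; this uniformity is what makes the random estimate meaningful.

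For the almost-sure statements I would use Assumption~\ref{cond:f}: $f\in L^2(\Omega;Lip)$ forces $c(\omega)=f(\omega)\in Lip$ for $\mu$-a.e.\ $\omega$, so the previous step applies to such $\omega$ and gives $\ker(\tilde a_{c(\omega)})=\{0\}$, which is~(i). For~(ii), put $M:=\big(C\max\{b^2k^2/\sqrt\varepsilon,\ b^3k^{5/2}\}\big)^2$ and take the deterministic functions $C_1\equiv M$ and $h_1(\omega):=\|g\|_{L^2(\Gamma)}^2$; squaring~\eqref{Destimatelem} yields $\|u(\omega)\|_{1,k}^2\le C_1(\omega)h_1(\omega)$ a.s., and $C_1h_1\in L^1(\Omega)$ since $\mu(\Omega)=1$ (if one keeps the data $\omega$-dependent, $h_1(\omega)=\|g(\omega)\|_{L^2(\Gamma)}^2$ works, lying in $L^1(\Omega)$ exactly because $g\in L^2(\Omega;L^2(\Gamma))$).

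The real difficulty is not any single inequality but ensuring the deterministic machinery truly applies sample by sample: one must check that the G\r{a}rding inequality, the compact embedding and the trace bounds hold on $D_{f_0}$ with constants independent of $f_0$, and that the constant of Theorem~\ref{thm:1} is surface-independent. Once this uniformity and the measurability of $c$ from Proposition~\ref{prop:1}(i) are in hand, passage from the pointwise bounds to the Bochner-space conclusion $u\in L^2(\Omega;H^1_{S,qp}(D))$ follows from the Pettis measurability and Bochner theorems, as in~\cite{Pembery2020}.
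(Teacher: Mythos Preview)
Your argument is correct and follows the same overall architecture as the paper: reduce everything to the deterministic result of Theorem~\ref{thm:1}, apply it $\omega$-by-$\omega$, and read off~(i) and~(ii). The difference is in how existence and uniqueness for a fixed $f_0\in Lip$ are obtained. The paper's proof is terse: it invokes Theorem~\ref{thm:1} together with Lemma~\ref{lem:Kirsch} (Kirsch's continuous-dependence result) and the earlier well-posedness theory of~\cite{Bao1995} to declare $\mathcal U(f_0)$ well-defined, then defines $C_1=C'\max\{b^2k^2/\sqrt{\varepsilon},\,b^3k^{5/2}\}$ with $C'=\sup_\Omega C$ and $h_1=\|g\|_{L^2(\Gamma)}$, checks measurability of $C_1h_1$, and concludes $C_1h_1\in L^1(\Omega)$ via Cauchy--Schwarz. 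You instead supply a self-contained G\aa rding/Fredholm argument (correctly computing $\mathrm{Re}\,(-\int_\Gamma T\tau u\,\overline{\tau u})\ge 0$), and then feed $g=0$ into the conditional estimate~\eqref{Destimate} to get injectivity. This buys two things: it avoids the citation chain to~\cite{Bao1995}, and it sidesteps the regularity mismatch in the paper's route, since Lemma~\ref{lem:Kirsch} as stated requires $f\in C^2$ while Assumption~\ref{cond:f} only gives Lipschitz; your compact-embedding step needs only a bounded Lipschitz domain. Your squaring of the bound in~(ii) is also the cleaner choice given that the target inequality is on $\|u(\omega)\|_{1,k}^2$. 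Finally, you flag exactly the right residual concern---uniformity of the constant in Theorem~\ref{thm:1} with respect to the surface---which the paper handles by taking $C'=\sup_\Omega C$ without further comment.
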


\begin{remark}
	From the general framework in  \cite{Pembery2020}, one can conclude that (i) with properties of $\tilde{a}$, $\tilde{G}$ and $c$ stated in Proposition \ref{prop:1}, the maps $\mathcal{A}$ and $\mathcal{G}$ defined by \eqref{def:AA} and \eqref{def:GG} are well-defined;
	(ii) with the almost surely uniqueness property in Proposition \ref{prop:2}, the solution to the Helmholtz equation in the stochastic case is unique in $L^2(\Omega;H_{S,qp}^1(D))$;
	(iii) with the almost surely stability property in Proposition \ref{prop:2}, integrability and measurability implies the stochastic a priori bound.
\end{remark}

In order to verify Proposition \ref{prop:1} and Proposition \ref{prop:2}, we give the following four lemmas. Lemma \ref{lem:Kirsch} means that the solution to the scattering problem depends continuously on the scattering surface.
Lemma \ref{lem:Pettis} shows the equivalence between strong measurability and measurability as well as $\mu$-essentially separability.
Lemma \ref{lem:Bochner} shows the necessary and sufficient condition for Bochner integrability.
Lemma \ref{u_welldefined} shows that the solution operator $\mathcal{U}$ is well defined.

\begin{lemma}\label{lem:Kirsch}
	(continuous dependence on the boundary curve $f$   \cite{Kirsch1993}) Let $f\in C^2(\mathbb{R})$ be $2 \pi$-periodic and $u\in H_{S,qp}^1$ be the unique solution of the scattering problem \eqref{eq:u_deter}. Let $K \subset D$ be compact. Then there exists $\gamma>0$ and $C>0$ both depending on $k, u^i, f$ and $K$, such that for all $2\pi$-periodic $r\in C^1(\mathbb{R})$ with $\|r-f\|_{1,\infty}\leq\gamma$, the unique solution $u_r$ of the scattering problem corresponding to $r$ satisfies
	$$ \|u_r-u\|_{H^1(K)} \leq C \|f-r\|_{1,\infty}.$$
	Here, $\|q\|_{1,\infty}:=\|q\|_{\infty}+\|q'\|_{\infty}$ denotes the norm in $C^1[0,\Lambda]$, where $\Lambda$ is the period of the scattered structures.
\end{lemma}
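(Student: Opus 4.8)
The plan is to treat this as a domain-perturbation statement and reduce it, in the standard way, to a fixed reference domain on which the two problems become a small perturbation of one another; the estimate then follows from a Banach (Neumann-series) argument on the associated operators. First I would fix $D=D_f$ (the region between the graph $S=S_f$ of $f$ and $\Gamma$) as the reference domain and, for curves $r$ with $\|r-f\|_{1,\infty}$ small, construct a diffeomorphism of $\mathbb{R}^2$ of the form
\[
\Phi_r(x)=x+\chi(x)\,\bigl(0,\ r(x_1)-f(x_1)\bigr),
\]
where $\chi$ is a smooth, $2\pi$-periodic-in-$x_1$ cut-off with $\chi\equiv 1$ in a neighbourhood of $S$ and $\chi\equiv 0$ in neighbourhoods of $\Gamma$ and of $K$; such a $\chi$ exists because $K$ is compact in the open set $D$, so $\mathrm{dist}(S,K\cup\Gamma)>0$. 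For $\|r-f\|_{1,\infty}\le\gamma_0$ (with $\gamma_0$ depending on $\chi$, hence on $f$ and $K$) the map $\Phi_r$ is a bi-Lipschitz homeomorphism, equal to the identity near $\Gamma$ and near $K$, mapping $S_f$ onto $S_r$ and hence $D_f$ onto $D_r$, and commuting with the period shift $x_1\mapsto x_1+2\pi$, so that quasi-periodicity is preserved.

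Second, I would push the $r$-problem forward by $\Phi_r$: the function $\tilde u_r:=u_r\circ\Phi_r$ lies in $H^1_{S,qp}(D)$ and solves $\tilde a_r(\tilde u_r,v)=\tilde G(v)$ for all $v\in H^1_{S,qp}(D)$, where
\[
\tilde a_r(w,v)=\int_D A_r\,\nabla w\cdot\nabla\bar v-k^2\int_D J_r\,w\bar v-\int_\Gamma T(\tau w)\overline{\tau v},
\]
with $A_r=(D\Phi_r)^{-1}(D\Phi_r)^{-\top}\det D\Phi_r$ and $J_r=\det D\Phi_r$. Crucially the boundary term and the right-hand side $\tilde G(v)=\int_\Gamma g\,\overline{\tau v}$ are \emph{unchanged}, because $\Phi_r$ is the identity near $\Gamma$ and $T$, $g$ depend only on $k,\theta,b$; and the homogeneous Dirichlet condition on $S$ is preserved since $\Phi_r(S_f)=S_r$. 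Since $A\mapsto A^{-1}A^{-\top}\det A$ is Lipschitz near the identity and $D\Phi_r$ depends affinely on $(r-f,\,r'-f')$, one obtains $\|A_r-I\|_{L^\infty(D)}+\|J_r-1\|_{L^\infty(D)}\le C_K\|r-f\|_{1,\infty}$, and therefore $\|\tilde a_r-\tilde a\|\le C_K\|r-f\|_{1,\infty}$ as bounded sesquilinear forms on $H^1_{S,qp}(D)$, where $\tilde a=\tilde a_f$ is exactly the form \eqref{aw}.

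Third, I would encode $\tilde a$ and $\tilde a_r$ as operators $\mathcal L,\mathcal L_r\colon H^1_{S,qp}(D)\to (H^1_{S,qp}(D))^*$. Because $\Re\tilde a(v,v)\ge\|\nabla v\|_{L^2(D)}^2-k^2\|v\|_{L^2(D)}^2$ (the high-frequency part of $-\int_\Gamma T(\tau v)\overline{\tau v}$ being nonnegative), $\tilde a$ satisfies a Gårding inequality, so $\mathcal L$ is Fredholm of index zero; the assumed uniqueness of $u$ then forces $\mathcal L$ to be an isomorphism, and I set $M=\|\mathcal L^{-1}\|$. Choosing $\gamma=\min\{\gamma_0,\,1/(2MC_K)\}$, for $\|r-f\|_{1,\infty}\le\gamma$ we have $\|\mathcal L_r-\mathcal L\|\le 1/(2M)$, so $\mathcal L_r$ is invertible with $\|\mathcal L_r^{-1}\|\le 2M$ by the Neumann series; in particular $u_r$ exists, is unique, and $\|\tilde u_r\|_{H^1(D)}\le 2M\|\tilde G\|$. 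From $\mathcal L_r\tilde u_r=\tilde G=\mathcal L u$ we get $\mathcal L(\tilde u_r-u)=(\mathcal L-\mathcal L_r)\tilde u_r$, hence
\[
\|\tilde u_r-u\|_{H^1(D)}\le M\,\|\mathcal L-\mathcal L_r\|\,\|\tilde u_r\|_{H^1(D)}\le 2M^2C_K\|\tilde G\|\,\|r-f\|_{1,\infty}=:C\,\|f-r\|_{1,\infty}.
\]
Since $\Phi_r$ is the identity on $K$, we have $u_r=\tilde u_r$ on $K$, so $\|u_r-u\|_{H^1(K)}=\|\tilde u_r-u\|_{H^1(K)}\le\|\tilde u_r-u\|_{H^1(D)}\le C\|f-r\|_{1,\infty}$, as claimed.

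The main obstacle I expect is in the first two steps: designing the cut-off $\chi$ so that $\Phi_r$ is simultaneously the identity near $\Gamma$ and near $K$ while still dragging $S_f$ onto $S_r$, checking that $\Phi_r$ is a genuine homeomorphism of $D_f$ onto $D_r$ for $\gamma$ small, and — most importantly — verifying that the coefficient perturbation depends \emph{Lipschitz-continuously} (not merely continuously) on $\|r-f\|_{1,\infty}$. The latter is precisely what forces the $C^1$ norm on the boundary curves, since $A_r$ involves $D\Phi_r$ and hence $r'$; once that is in hand, the operator-theoretic part (Gårding plus Neumann series) is routine given the well-posedness for $f$.
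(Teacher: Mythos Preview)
Your proposal is correct and follows the standard domain-perturbation strategy. Note, however, that the paper does not supply its own proof of this lemma: it is quoted from \cite{Kirsch1993} and used as a black box. That said, the paper \emph{does} reproduce essentially the same argument you give when proving the closely related Proposition~\ref{prop:1} and Lemma~\ref{u_continuity}: it builds the same vertical-shift diffeomorphism (there called $\mathcal{F}$, with a one-variable cut-off $\alpha(y_2-f_0(y_1))$ playing the role of your $\chi$), pushes the variational form to the fixed domain, reads off $\lvert \tilde a_{f_m}(v,\phi)-\tilde a_{f_0}(v,\phi)\rvert\le C\|f_m-f_0\|_{1,\infty}\|v\|_{H^1}\|\phi\|_{H^1}$ from $\det J_{\mathcal{F}}=1+\mathcal{O}(\|f_m-f_0\|_{1,\infty})$ and $b_{ij}=\delta_{ij}+\mathcal{O}(\|f_m-f_0\|_{1,\infty})$, and then invokes ``general perturbation theory of variational equations'' \cite{Kato1976} for the last step---which is exactly your Neumann-series/Banach argument in compressed form. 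The only cosmetic difference is that the paper's cut-off depends on the scalar $y_2-f_0(y_1)$ rather than on $x$, but both constructions achieve $\mathcal{F}=\mathcal{I}$ on $K$ and near $\Gamma$, so that the boundary term and $K$-restriction behave as you describe. In short: your route is the paper's (and Kirsch's) route, spelled out more explicitly.
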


\begin{lemma}\label{lem:Pettis}
	(Pettis measurability theorem
	(Proposition 2.15 in  \cite{Ryan2002})). Let  $(\Omega, \mathcal{F}, \mu)$ be a complete $\sigma$-finite measure space. The following are equivalent for a function $f: \Omega \rightarrow X$ (i) $f$ is strongly measurable, (ii) $f$ is measurable and $\mu$-essentially separably valued.
\end{lemma}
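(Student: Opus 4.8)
The plan is to prove the two implications of the equivalence separately, using the standard characterisation of strong measurability as $\mu$-almost-everywhere approximation by simple ($\mathcal{F}$-measurable, finitely-valued) functions.

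First I would treat (i) $\Rightarrow$ (ii), which is essentially bookkeeping. If $f$ is strongly measurable, pick simple functions $f_n:\Omega\to X$ with $f_n(\omega)\to f(\omega)$ for all $\omega$ outside a $\mu$-null set $N$. Each $f_n(\Omega)$ is finite, so $C:=\bigcup_n f_n(\Omega)$ is countable; for $\omega\notin N$ one has $f(\omega)\in\overline{C}$, so $f$ is $\mu$-essentially separably valued, the separable set being $\overline{C}$ (or its closed linear span). Measurability of $f$ follows because the $f_n$ are measurable, hence so is the map $g$ equal to $\lim_n f_n$ on $\Omega\setminus N$ and to $0$ on $N$; since $f=g$ off the $\mu$-null set $N$ and $(\Omega,\mathcal{F},\mu)$ is complete, $f$ is measurable.

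The substance is the converse (ii) $\Rightarrow$ (i). Assuming $f$ is measurable and $\mu$-essentially separably valued, discard a $\mu$-null set --- legitimate by completeness --- so that $f(\Omega)$ lies in a fixed separable closed subspace $X_0\subseteq X$ with a countable dense subset $\{x_j\}_{j\ge1}$. (If ``measurable'' is read in the weak sense, note first that on the separable space $X_0$ the Borel $\sigma$-algebra is generated by the restrictions of functionals in $X^*$, so weak measurability of $f$ into $X_0$ upgrades to Borel measurability.) For each $m\ge1$ define the retraction $\pi_m:X_0\to X_0$ by $\pi_m(x)=x_j$ with $j=\min\{i:\|x-x_i\|<1/m\}$; the preimage $\pi_m^{-1}(\{x_j\})$ is a Borel subset of $X_0$, so $\pi_m$ is Borel measurable and countably-valued, and $\|\pi_m(x)-x\|<1/m$ for all $x\in X_0$. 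Hence $f_m:=\pi_m\circ f$ is a measurable, countably-valued function with $\|f_m(\omega)-f(\omega)\|<1/m$ for every $\omega$, so $f_m\to f$ uniformly. It remains to replace each $f_m$ by a simple function: enumerating the values of $f_m$ as $\{y_{m,\ell}\}_\ell$ with measurable preimages $A_{m,\ell}$, the truncations $g_{m,p}=\sum_{\ell\le p}y_{m,\ell}\mathbf{1}_{A_{m,\ell}}$ are simple and agree with $f_m$ on $\bigcup_{\ell\le p}A_{m,\ell}\uparrow\Omega$; choosing $p(m)$ large enough and then passing to a diagonal sequence yields simple functions converging to $f$ $\mu$-a.e., i.e.\ strong measurability.

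I expect the reverse implication to be the only real obstacle, and inside it the delicate point is purely measure-theoretic: one must ensure that every set declared measurable genuinely is. The sets $\pi_m^{-1}(\{x_j\})$ are Borel in $X_0$, so their preimages under $f$ lie in $\mathcal{F}$ precisely because of the measurability hypothesis --- which is also why the reduction to separable range is indispensable, since otherwise the balls $B(x_j,1/m)$ need not cover $f(\Omega)$ and $\pi_m\circ f$ would not be defined. Completeness of $(\Omega,\mathcal{F},\mu)$ is used both to modify $f$ on the exceptional null set and, in the forward direction, to repair the limit on $N$. The remaining ingredients --- countability bookkeeping, the truncation from countably-valued to simple functions, and the diagonal extraction --- are routine and I would not dwell on them.
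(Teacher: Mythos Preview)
Your proof is correct and follows the standard route to the Pettis measurability theorem. However, the paper does not give its own proof of this lemma: it is stated as a quotation of Proposition~2.15 in Ryan's \emph{Introduction to Tensor Products of Banach Spaces} and is used only as an off-the-shelf tool in Section~4. So there is no argument in the paper to compare against; you have supplied a full proof where the authors simply cite the literature.

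For what it is worth, your argument is the textbook one (essentially Ryan's): in the forward direction, the countable union of the ranges of approximating simple functions furnishes the separable target; in the reverse direction, one exploits separability to build Borel-measurable nearest-point retractions $\pi_m$ onto a countable dense set, producing countably-valued measurable approximants, and then truncates to simple functions. Your remark that weak measurability upgrades to Borel measurability once the range is separable is exactly the point that makes the converse work, and your use of completeness to discard null sets is placed correctly. Nothing is missing.
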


\begin{lemma}\label{lem:Bochner}
	(Bochner's Theorem (Proposition 2.16 in \cite{Ryan2002})). If $f: \Omega \rightarrow X$ is a strongly measurable function, then $f$ is Bochner integrable if and only if the scalar function $\|f\|$ is integrable.
\end{lemma}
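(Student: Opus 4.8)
The plan is to prove the two implications separately, using only the definition of strong measurability ($f$ is an a.e.\ pointwise limit of a sequence of $X$-valued simple functions), the definition of Bochner integrability (existence of a sequence of integrable simple functions converging to $f$ in $L^1$-mean), and the classical scalar dominated convergence theorem. Since this is exactly Proposition~2.16 of \cite{Ryan2002}, in the paper itself the statement is simply quoted; what follows is the route I would take to reconstruct it.

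For the easy direction I would assume $f$ is Bochner integrable, fix integrable simple functions $s_n$ with $\int_\Omega \|s_n - f\|_X\,\mathrm{d}\mu \to 0$, observe that strong measurability of $f$ makes $\omega \mapsto \|f(\omega)\|_X$ a measurable scalar function, and then use the reverse triangle inequality $\bigl| \|s_n\|_X - \|f\|_X \bigr| \le \|s_n - f\|_X$ to obtain $\int_\Omega \|f\|_X\,\mathrm{d}\mu \le \int_\Omega \|s_n\|_X\,\mathrm{d}\mu + \int_\Omega \|s_n - f\|_X\,\mathrm{d}\mu$, which is finite for $n$ large because each $\|s_n\|_X$ is an integrable scalar simple function.

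For the converse I would begin from simple functions $\sigma_n \to f$ pointwise a.e.\ and then truncate them: put $s_n(\omega) := \sigma_n(\omega)$ on the set where $\|\sigma_n(\omega)\|_X \le 2\|f(\omega)\|_X$ and $s_n(\omega) := 0$ elsewhere. I would then verify three points: (a) each $s_n$ is still simple, using that $\|f\|_X$ is measurable so the truncation sets are measurable; (b) $s_n \to f$ pointwise a.e., since on $\{f \ne 0\}$ one has $\|\sigma_n(\omega)\|_X \to \|f(\omega)\|_X < 2\|f(\omega)\|_X$, so eventually $s_n = \sigma_n$, while on $\{f = 0\}$ the truncation forces $s_n(\omega) = 0 = f(\omega)$; (c) the bound $\|s_n - f\|_X \le \|s_n\|_X + \|f\|_X \le 3\|f\|_X$ holds, with $3\|f\|_X$ integrable by hypothesis (this also shows each $s_n$ is integrable, since its nonzero values are then attained on sets of finite measure). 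Applying the scalar dominated convergence theorem to $\|s_n - f\|_X \to 0$ gives $\int_\Omega \|s_n - f\|_X\,\mathrm{d}\mu \to 0$, so the sequence $(s_n)$ witnesses Bochner integrability of $f$.

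The only genuine subtlety, and the step I would be most careful about, is the truncation in the converse: it must simultaneously preserve simpleness (which rests on measurability of $\|f\|_X$, itself a consequence of strong measurability), preserve a.e.\ convergence to $f$ (which is why the threshold must sit strictly above $\|f(\omega)\|_X$, hence the factor $2$ and not $1$), and yield an integrable majorant. Everything else is routine bookkeeping with the scalar dominated convergence theorem.
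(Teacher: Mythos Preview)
Your proposal is correct, and you have already identified the key point: the paper does not prove this lemma at all but merely cites it as Proposition~2.16 of \cite{Ryan2002}. Your reconstruction is the standard argument for Bochner's theorem, and each step is sound---in particular, the truncation $s_n := \sigma_n \cdot \mathbf{1}_{\{\|\sigma_n\|\le 2\|f\|\}}$ is the classical device, and your verification that (a) the $s_n$ remain simple with finite-measure supports on nonzero values, (b) pointwise convergence survives both on $\{f\ne 0\}$ and on $\{f=0\}$, and (c) the dominant $3\|f\|$ is integrable, is complete and accurate.
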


\begin{lemma}\label{u_welldefined}
	( $\mathcal{U}$ is well defined)
	For $f_0\in\mathcal{C}$, the solution  $\mathcal{U}$ of the scattering problem exists, is unique, and depends continuously on $f_0$.
\end{lemma}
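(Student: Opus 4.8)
The plan is to establish the three assertions — existence, uniqueness, and continuous dependence of $\mathcal{U}(f_0)$ on $f_0$ — by first removing the $f_0$-dependence of the physical domain through a bi-Lipschitz flattening transform, and then combining the Fredholm alternative with the wavenumber-explicit a priori bound of Theorem~\ref{thm:1}. Fix $f_0\in\mathcal{C}=Lip$ and introduce the change of variables $\Phi_{f_0}$ mapping the reference rectangle $\hat D:=(0,\Lambda)\times(0,b)$ (this is the fixed domain $D$ appearing in the definition of $\mathcal{U}$) onto $D_{f_0}:=\{(x_1,x_2):f_0(x_1)<x_2<b\}$, for instance
\[
\Phi_{f_0}(\hat x_1,\hat x_2)=\Bigl(\hat x_1,\ f_0(\hat x_1)+\tfrac{b-f_0(\hat x_1)}{b}\,\hat x_2\Bigr).
\]
Since $f_0$ is Lipschitz, $\Phi_{f_0}$ is bi-Lipschitz, equal to the identity near $\hat x_2=b$, and its Jacobian together with the inverse Jacobian lies in $L^\infty(\hat D)$ with norms controlled by $\|f_0\|_{1,\infty}$ and the Lipschitz constant of $f_0$. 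Pulling \eqref{eq:u_deter} back through $\Phi_{f_0}$ gives an equivalent problem on the \emph{fixed} space $H^1_{S,qp}(\hat D)$: find $\hat u$ with $\hat a_{f_0}(\hat u,v)=\hat G_{f_0}(v)$ for all $v$, where $\hat a_{f_0}$ has bounded measurable coefficients depending on $f_0$ and $\nabla\Phi_{f_0}$, the nonlocal term on $\Gamma$ is untouched, and $\hat G_{f_0}$ depends on $f_0$ only through $\Phi_{f_0}$.

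For existence and uniqueness I would split $\hat a_{f_0}=\hat a^{\mathrm{c}}_{f_0}-\hat{\mathcal K}_{f_0}$, where $\hat a^{\mathrm{c}}_{f_0}$ collects the second-order part plus a positive shift and is coercive on $H^1_{S,qp}(\hat D)$ (the coefficient matrix is uniformly positive definite because $\Phi_{f_0}$ is bi-Lipschitz), while $\hat{\mathcal K}_{f_0}$ contains the $k^2$-term, the shift and the operator $T$, each of which induces a compact map on $H^1_{S,qp}(\hat D)$ (Rellich for the volume terms, smoothing of $T$ among the propagating and evanescent modes). By the Fredholm alternative, solvability for every right-hand side is equivalent to uniqueness; and uniqueness holds because Theorem~\ref{thm:1} is a \emph{conditional} estimate, so any solution of the homogeneous problem ($g=0$) satisfies $\|\nabla u\|_{L^2(D)}+k\|u\|_{L^2(D)}\le 0$, hence $u\equiv 0$ and, transporting back, $\hat u\equiv 0$. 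Thus $\mathcal{U}(f_0)$ is well defined, and the constant in \eqref{Destimate} being independent of $f_0$ gives a bound on $\|\mathcal{U}(f_0)\|_{1,k}$ that is uniform over $f_0$.

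For continuous dependence, let $f_0^{(n)}\to f_0$ in $\mathcal{C}$, i.e. in $\|\cdot\|_{1,\infty}$. Then $\Phi_{f_0^{(n)}}\to\Phi_{f_0}$ and their Jacobians converge in $L^\infty(\hat D)$, so the coefficients of $\hat a_{f_0^{(n)}}$ converge uniformly to those of $\hat a_{f_0}$, whence $\hat a_{f_0^{(n)}}\to\hat a_{f_0}$ in operator norm on $H^1_{S,qp}(\hat D)$ and $\hat G_{f_0^{(n)}}\to\hat G_{f_0}$ in $H^1_{S,qp}(\hat D)$. Writing $\hat a_{f_0}(\hat u_n-\hat u,v)=(\hat a_{f_0}-\hat a_{f_0^{(n)}})(\hat u_n,v)+(\hat G_{f_0^{(n)}}-\hat G_{f_0})(v)$, using the uniform bound $\|\hat u_n\|_{1,k}\le C$ from the previous step and the inf--sup stability of $\hat a_{f_0}$ (equivalent to uniqueness together with the Fredholm property), one concludes $\|\hat u_n-\hat u\|_{1,k}\to 0$; transporting back along the convergent maps $\Phi^{-1}_{f_0^{(n)}}$ — or, after a smooth approximation of $f_0$, invoking the interior estimate of Lemma~\ref{lem:Kirsch} on a compact $K$ contained in all $D_{f_0^{(n)}}$ for $n$ large — yields $\mathcal{U}(f_0^{(n)})\to\mathcal{U}(f_0)$, which is the asserted continuity.

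The hard part will be the interaction of the $f_0$-dependence of the physical domain with the low (Lipschitz) regularity of $f_0$: flattening fixes the function space but only produces $L^\infty$, not continuous, coefficients, so the $C^2$/$C^1$ statement of Lemma~\ref{lem:Kirsch} cannot be quoted verbatim and the continuity argument has to be run at the level of the transformed sesquilinear forms. The technical heart is then to verify that coercivity of the principal part and compactness of the remainder survive under these rough coefficients, and that the inf--sup constant of $\hat a_{f_0}$ is stable under small $L^\infty$-perturbations of the coefficients — both of which ultimately rely on the stability constant in Theorem~\ref{thm:1} being independent of the scattering surface.
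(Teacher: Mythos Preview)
Your proposal is correct and takes a genuinely different, more constructive route than the paper.  The paper's own proof of this lemma is two sentences: it appeals to \cite{Bao1995} for existence and uniqueness of the deterministic problem, and then invokes Theorem~\ref{thm:1} together with Lemma~\ref{lem:Kirsch} (Kirsch's continuous-dependence result) to declare $\mathcal{U}$ well defined.  You instead (i) set up existence and uniqueness from scratch via a flattening transform plus the Fredholm alternative, using the a~priori estimate of Theorem~\ref{thm:1} to kill the kernel, and (ii) prove continuous dependence directly by showing $\hat a_{f_0^{(n)}}\to\hat a_{f_0}$ in operator norm on the fixed space and then using inf--sup stability.  This is essentially the same domain-transformation machinery that the paper deploys later, in the proofs of Proposition~\ref{prop:1} and Lemma~\ref{u_continuity}, so your argument is fully in the spirit of the paper --- you have simply front-loaded it.

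What your approach buys is that it is self-contained and, more to the point, it is formulated at the Lipschitz regularity actually assumed on $\mathcal{C}$.  You correctly flag as ``the hard part'' that Lemma~\ref{lem:Kirsch} is stated for $f\in C^2$ and perturbations in $C^1$, and so cannot be quoted verbatim for merely Lipschitz surfaces; the paper's short proof glosses over this mismatch by citing Kirsch anyway.  Running the continuity argument on the transformed sesquilinear forms with $L^\infty$ coefficients, as you propose, is the honest way to close that gap.  What the paper's approach buys is brevity, by outsourcing both existence/uniqueness and continuous dependence to the literature.
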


\begin{proof}	
	For $f_0\in\mathcal{C}$, the scattering problem is a deterministic case satisfying the condition in Theorem \ref{thm:1}. Based on the previous work in   \cite{Bao1995} on the existence and uniqueness, it follows from Theorem \ref{thm:1} and Lemma \ref{lem:Kirsch} that the solution $\mathcal{U}$ is  well defined.
\end{proof}

Here the basic propositions on measure theory and Bochner spaces needed for the proof are omitted. See  \cite{Bogachev2007} and  \cite{Diestel1977} for more details.

\subsection{Proof of Proposition \ref{prop:1} and Proposition \ref{prop:2}}\label{sec:4.verification}

In this section, we verify that the random periodic structure scattering problem has all necessary propositions required as in the general framework \cite{Pembery2020}.
Proposition \ref{prop:1} includes
measurability  and
$\mu$-essentially separability of $c$, continuity of $\tilde{a}  $ and $\tilde{G}  $ and
regularity of $\tilde{a}  \circ c$ and  $\tilde{G}  \circ c$.
Proposition \ref{prop:2} includes the necessary
stability and uniqueness which hold almost surely. To be specific, under the assumpiton of resonance exclusion and stochastic regularity of the scattering surface, a variable transform is introduced and theory of Bochner spaces such as Pettis measurability theorm and Bochner's theorem is used to complete the proof.

First, we give the proof of Proposition \ref{prop:1}.

\begin{proof}[Proof of Proposition~{\upshape\ref{prop:1}}]
	Since each of $f$ is a Lipschitz function, $f$ is strongly measurable. By Pettis measurability theorem (Lemma \ref{lem:Pettis}), it follows that $c$ defined by Definition \ref{def:c} is measurable and $\mu$-essentially separably valued, so properties of $c$ in Proposition \ref{prop:1} are satisfied.
	
	In order to prove the continuity of $\tilde{a}$ and $\tilde{G}$, we need to show that if $(f_m)\rightarrow(f_0)$ in $\mathcal{C}$ then $\tilde{a}(f_m)\rightarrow \tilde{a}(f_0)$ in $B(H_{S,qp}^1(D),H_{S,qp}^1(D))$, and similarly for $\tilde{G}$. Since
	\begin{equation}\label{varia:f_m}
		\tilde{a}_{f_m}(v_m,\phi)=\tilde{G}_{f_m}(\phi),
	\end{equation}
	where
	\begin{equation}
		\tilde{a}_{f_m} =\int_{D_{f_m}}  \nabla v_m \cdot \nabla \bar{\phi}
		- k^2 \int_{D_{f_m}} v_m \bar{\phi}
		- \int_{\Gamma} T \tau v_m \overline{\tau \phi}  , \quad  \tilde{G}_{f_m} = \int_{\Gamma} g \overline{\tau \phi} ,
	\end{equation}
	and
	\begin{equation}\label{varia:f_0}
		\tilde{a}_{f_0}(v_0,\phi)=\tilde{G}_{f_0}(\phi),
	\end{equation}
	where
	\begin{equation}\label{varia:f_0aG}
		\tilde{a}_{f_0} =\int_{D_{f_0}}  \nabla v_0 \cdot \nabla \bar{\phi}
		- k^2 \int_{D_{f_0}} v_0 \bar{\phi}
		- \int_{\Gamma} T \tau v_0 \overline{\tau \phi} , \quad  \tilde{G}_{f_0} = \int_{\Gamma} g \overline{\tau \phi}.
	\end{equation}
	Our goal is to  transform the volume integral in $D_{f_m}$ into an integral over $D_{f_0}$ by a suitable change of the variable $x$.
	
	Choose $\gamma_0>0$ such that
	\begin{equation}\label{def:gamma}
		\gamma_0<\min\{\min\{x_2:(x_1,x_2)\in D_{f_0}\}-f_0(x_1):x_1\in\mathbb{R}\},
	\end{equation} and a function $\alpha\in C^1(\mathbb{R})$ with
	\begin{equation}\label{def:alpha}
		\alpha(t) =\left\{ \begin{array}{ll}
			1, & t \leq \gamma_0/2 \\
			0, & t\geq \gamma_0.
		\end{array} \right..
	\end{equation}	
	For $f_m$ such that $\|f_m-f_0\|_{1,\infty} \leq \gamma_0$, define $\mathcal{F}: D_{f_0} \rightarrow D_{f_m}$ by
	$$
	\mathcal{F}(y) = y_2 + \alpha(y_2-f_0(y_1)) [f_m(y_1)-f_0(y_1)] \hat{e}_2,\quad y\in D_{f_0},
	$$
	where $\hat{e}_2=(0,1)^T$.	
	For $\|f_m-f_0\|_{1,\infty} \leq \gamma$ with sufficiently small $\gamma\leq \gamma_0$, $\mathcal{F}$ is a diffeomorphism and $\mathcal{F}=\mathcal{I}$ on $K$. The Jacobian $J_{\mathcal{F}}$ of $\mathcal{F}$ satisfies $J_{\mathcal{F}}(y) = \mathcal{I} + \mathcal{O}(\|f_m-f_0\|_{1,\infty})$ uniformly in $y\in D_{f_0}$. For the inverse $\mathcal{Q}$ of $\mathcal{F}$, one also has $J_{\mathcal{Q}}(y) = \mathcal{I} + \mathcal{O}(\|f_m-f_0\|_{1,\infty})$. The change of variable $x=\mathcal{F}(y)$ then transforms \eqref{varia:f_m} into
	\begin{equation}\label{eq:varia_new}
		\int_{D_{f_0}}\left[\sum_{i, j=1}^{2} b_{i j} \frac{\partial \hat{v}_{m}}{\partial y_{i}} \frac{\partial \overline{\hat{\phi}}}{\partial y_{j}}-k^{2} \hat{v}_{m} \overline{\hat{\phi}}\right] \operatorname{det} (J_{\mathcal{F}}) \mathrm{d} y-\int_{\Gamma} \overline{\tau \hat{\phi}} T \tau \hat{v}_{m} \mathrm{d} s=\int_{\Gamma} g \overline{\tau \hat{\phi}} \mathrm{d} s,
	\end{equation}
	where $\hat{v}_{m}=v_{m} \circ \mathcal{F}, \hat{\phi}=\phi \circ \mathcal{F}$ both in $H_{S,qp}^1(D_{f_0})$, and
	\begin{equation}\label{bij}
		b_{i j}(y)=\sum_{l=1}^{2} \frac{\partial \mathcal{Q}_{i}(x)}{\partial x_{l}} \frac{\partial \mathcal{Q}_{j}(x)}{\partial x_{l}}\rvert_{x=\mathcal{F}(y)}, \quad i, j = 1, 2.
	\end{equation}
	The left hand side of \eqref{eq:varia_new} defines a sesquilinear form $\tilde{a}_{f_m}(v,\phi)$ on $H_{S,qp}^1(D_{f_0})$.
	Since $ \operatorname{det} (J_{\mathcal{F}}) = 1 + \mathcal{O}(\|f_m-f_0\|_{1,\infty}) $ and $ b_{ij}=\delta_{ij}+\mathcal{O}(\|f_m-f_0\|_{1,\infty}) $, one concludes that
	$$\lvert \tilde{a}_{f_m}(v, \phi)-\tilde{a}_{f_0}(v, \phi)\rvert \leq C\|f_m-f_0\|_{1, \infty}\|v\|_{H^{1}\left(D_{f_0}\right)}\|\phi\|_{H^{1}\left(D_{f_0}\right)} \text { for all } v, \phi \in H_{S,qp}^1(D_{f_0}).$$
	Hence if $(f_m)\rightarrow(f_0)$ in $\mathcal{C}$, then $\tilde{a}_{f_m}\rightarrow \tilde{a}_{f_0}$ in $B(H_{S,qp}^1(D),H_{S,qp}^1(D))$. For $\tilde{G}$, it follows from the definition \eqref{varia:f_0}-\eqref{varia:f_0aG}, the right hand side of \eqref{eq:varia_new},  $\hat{\phi}=\phi \circ \mathcal{F} $ and $\phi=\phi \circ \mathcal{I}$ that
	if $(f_m)\rightarrow(f_0)$ in $\mathcal{C}$, then $\tilde{G}_{f_m}\rightarrow \tilde{G}_{f_0}$ in $H_{S,qp}^1(D)$.
	
	Since $c$ is strongly measurable and the map $\tilde{a}$ is continuous,  $\tilde{a}\circ c $ is strongly measurable. Recall that the operator $T$ is continuous from $H^{1/2}(\Gamma)$ to $H^{-1/2}(\Gamma)$   \cite{Nedelec2001}. For $v\in H_{S,qp}^1(D)$, $\phi\in H_{S,qp}^1(D)$, one has
	\begin{equation}\label{eq:afomega}
		\tilde{a}_{f(\omega)}(v, \phi) =\int_{D_{f(\omega)}}  \nabla v \cdot \nabla \bar{\phi}
		- k^2 \int_{D_{f(\omega)}} v \bar{\phi}
		- \int_{\Gamma} T \tau v \overline {\tau \phi}.
	\end{equation}
	Same as above, transform the volume integral in $D_{f(\omega)}$ into an integral over $D_{f_0}$.
	For $f_1=f(\omega)$ such that $\|f_1-f_0\|_{1,\infty} \leq \gamma_0$ (defined same as \eqref{def:gamma}), define $\mathcal{F}: D_{f_0} \rightarrow D_{f_1}$ by
	$$
	\mathcal{F}(y) = y_2 + \alpha(y_2-f_0(y_1)) [f_1(y_1)-f_0(y_1)] \hat{e}_2,\quad y\in D_{f_0},
	$$
	where $\hat{e}_2=(0,1)^T$ and $\alpha$ is defined by \eqref{def:alpha}.	
	For $\|f_1-f_0\|_{1,\infty} \leq \gamma$ with sufficiently small $\gamma \leq \gamma_0$, $\mathcal{F}$ is a diffeomorphism and $\mathcal{F}=\mathcal{I}$ on $K$.  The change of variable $x=\mathcal{F}(y)$ then transforms \eqref{eq:afomega} into
	\begin{equation}
		\tilde{a}_{f(\omega)}(\hat{v}, \phi)=\int_{D}\left[\nabla \hat{v}\operatorname{det} (J_{\mathcal{F}}) B \nabla \bar{\phi}-k^{2}(\operatorname{det} (J_{\mathcal{F}}) \hat{v} \bar{\phi}\right] d y,
	\end{equation}
	where $\hat{v}=v \circ \mathcal{F}, \hat{\phi}=\phi \circ \mathcal{F}$ both in $H_{S,qp}^1(D_{f_0})$, and $B=(b_{ij})_{i,j=1,2}$ from \eqref{bij}. 		
	Since $ \operatorname{det} (J_{\mathcal{F}}) = 1 + \mathcal{O}(\|f_m-f_0\|_{1,\infty}) $ and $ b_{ij}=\delta_{ij}+\mathcal{O}(\|f_m-f_0\|_{1,\infty}) $, observe that the Cauchy-Schwarz inequality and properties of $T$ imply that there exists $C>0$ such that
	$$
	\lvert \tilde{a}_{f(\omega)}(\hat{v}, \phi)\rvert \leq
	C \|f(\omega)-f_0\|_{1,\infty}^2 \|v\|_{H^1(D)} \|\phi\|_{H^1(D)} \text { for all } v, \phi \in H_{S,qp}^1(D_{f_0}),
	$$
	and hence $\tilde{a}\circ c \in L^{\infty}(\Omega;B(H_{S,qp}^1(D),H_{S,qp}^1(D))) $.
	
	Since $c$ is strongly measurable and the map $\tilde{G}$ is continuous,  $\tilde{G}\circ c $ is strongly measurable. It is clear that $\| (\tilde{G}\circ c)(\omega) \|_{H_{S,qp}^1(D)} \leq \|g\|_{L^2(D)}$, and thus $\tilde{G}\circ c \in L^2(\Omega;H_{S,qp}^1(D))$ since $g \in L^2(\Omega;L^2(D))$.
\end{proof}

Now prove Proposition \ref{prop:2}.

\begin{proof}[Proof of Proposition~{\upshape\ref{prop:2}}]
	For $f_0\in \mathcal{C}$, it follows from Theorem \ref{thm:1} and Lemma \ref{lem:Kirsch} that the solution $\mathcal{U}(f_0)$ of the variational problem exists, is unique, and has the $k$-explicit stability estimate \eqref{Destimatelem}. 	
	Uniqueness almost surely holds immediately from Lemma \ref{u_welldefined}.
	
	For stability almost surely, choose
	$C_1 =C'  \max \left\{ \dfrac{b^2 {k} ^{2} } {\sqrt{\varepsilon} } ,b^3 {k} ^{\frac{5} {2} }   \right\} $ with $C'=\sup_\Omega {C}$, $C$ being the previous constant in \eqref{Destimatelem}, and $h_1 = \|g\|_{L^2(\Gamma)}$.
	It remains to show that $C_1 h_1 \in L^1(\Omega)$.
	We first show that $C_1 h_1$ is measurable and then show that it lies in $ L^1(\Omega)$.
	
	To conclude $C_1$ is measurable, use the fact that the product, sum and maximum of two measurable functions are measurable. Since $g$ is measurable and the map $g\mapsto \|g\|_{L^2(\Gamma)}^2$ is clearly continuous, $h_1$ is measurable. As the product of two measurable functions is measurable, it follows that $C_1 h_1$ is measurable.
	
	Now show that $C_1 h_1 \in L^1(\Omega)$. Using the Cauchy-Schwarz inequality yields
	\begin{eqnarray} \label{C1h1L1}
		\left\|C_{1}h_1\right\|_{L^{1}(\Omega)}=\int_{\Omega} C_{1}(\omega)h_1 (\omega) \mathrm{~d} \mathbb{P}(\omega)\leq \left\|C_{1}\right\|_{L^{1}(\Omega)}\left\|\left\|g\right\|_{L^{2}\left(\Gamma\right)}^{2}\right\|_{L^{1}(\Omega)}<\infty.
	\end{eqnarray}
	Therefore $C_1 h_1 \in L^1(\Omega)$ as required. Integrating \eqref{Destimatelem} in the probability space and using \eqref{C1h1L1} obviously yield \eqref{Sestimate}.
\end{proof}

\subsection{Proof of Theorem \ref{thm:3}}
\label{sec:4.proof}

Before the proof, the continuity of the solution operator $\mathcal{U}$ is given in the following lemma.

\begin{lemma}\label{u_continuity}
	(Continuity of $\mathcal{U}$)
	For the scattering problem by a random periodic structure, the solution operator $\mathcal{U}:\mathcal{C}\rightarrow H_{S,qp}^1(D)$ is continuous.
\end{lemma}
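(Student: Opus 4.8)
The plan is to carry out, at the level of solutions rather than of sesquilinear forms, the change-of-variables argument already used in the proof of Proposition \ref{prop:1}, and to close the estimate with the unique solvability and the stability of the deterministic problem with the surface frozen at $f_0$. Since $\mathcal{C}=Lip$ is a normed (hence metric) space, it suffices to prove sequential continuity: given $f_m\to f_0$ in $\mathcal{C}$, i.e. $\|f_m-f_0\|_{1,\infty}\to 0$, I must show $\mathcal{U}(f_m)\to\mathcal{U}(f_0)$ in $H_{S,qp}^1(D)$, where $\mathcal{U}(f)$ is understood as the pullback to the fixed reference domain $D_{f_0}$ of the genuine solution on $D_f$.

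First I would transport everything to $D_{f_0}$. For $m$ large enough that $\|f_m-f_0\|_{1,\infty}\le\gamma$, let $\mathcal{F}_m:D_{f_0}\to D_{f_m}$ be the diffeomorphism constructed in the proof of Proposition \ref{prop:1} (equal to the identity near $\Gamma$ and on the compact set $K\subset D$), and set $\hat v_m:=\mathcal{U}(f_m)\circ\mathcal{F}_m\in H_{S,qp}^1(D_{f_0})$ and $v_0:=\mathcal{U}(f_0)$. As in \eqref{eq:varia_new}, $\hat v_m$ solves $\tilde a_{f_m}(\hat v_m,\phi)=\tilde G_{f_m}(\phi)$ for all $\phi\in H_{S,qp}^1(D_{f_0})$, where $\tilde a_{f_m}$ is the transported sesquilinear form whose coefficients are the $b_{ij}$ from \eqref{bij} and $\det J_{\mathcal{F}_m}$, and $\tilde G_{f_m}=\tilde G_{f_0}$ because $\mathcal{F}_m$ is the identity near $\Gamma$. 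Subtracting the equation $\tilde a_{f_0}(v_0,\phi)=\tilde G_{f_0}(\phi)$ gives
\begin{equation*}
	\tilde a_{f_0}(\hat v_m-v_0,\phi)=-\big(\tilde a_{f_m}-\tilde a_{f_0}\big)(\hat v_m,\phi),\qquad\forall\,\phi\in H_{S,qp}^1(D_{f_0}).
\end{equation*}

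Next I would invoke the two ingredients that close the argument. From the expansions $\det J_{\mathcal{F}_m}=1+\mathcal{O}(\|f_m-f_0\|_{1,\infty})$ and $b_{ij}=\delta_{ij}+\mathcal{O}(\|f_m-f_0\|_{1,\infty})$ established in the proof of Proposition \ref{prop:1}, one gets $\|\tilde a_{f_m}-\tilde a_{f_0}\|\le C\|f_m-f_0\|_{1,\infty}=:\eta_m\to 0$, the norm being that of $B(H_{S,qp}^1(D_{f_0}),(H_{S,qp}^1(D_{f_0}))^*)$. On the other hand, Theorem \ref{thm:1} together with the well-posedness result of \cite{Bao1995} shows that $\tilde a_{f_0}$ is boundedly invertible, say $\|w\|_{H^1(D_{f_0})}\le M_0\sup_{\|\phi\|\le 1}|\tilde a_{f_0}(w,\phi)|$ for all $w$, with $M_0$ depending on $k,b,\varepsilon,f_0$ but not on $m$ (for fixed $k,b,\varepsilon$ it is one fixed number). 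A standard perturbation (Neumann-series) argument then yields, for $m$ with $M_0\eta_m\le\frac12$, first $\|\hat v_m\|_{H^1(D_{f_0})}\le 2\|v_0\|_{H^1(D_{f_0})}$ and then
\begin{equation*}
	\|\hat v_m-v_0\|_{H^1(D_{f_0})}\le M_0\,\eta_m\,\|\hat v_m\|_{H^1(D_{f_0})}\le 2M_0\|v_0\|_{H^1(D_{f_0})}\,\eta_m,
\end{equation*}
which tends to $0$ as $m\to\infty$. Identifying $\mathcal{U}(f_m)$ with its pullback $\hat v_m$ on the reference domain, this is exactly $\mathcal{U}(f_m)\to\mathcal{U}(f_0)$ in $H_{S,qp}^1(D)$, so $\mathcal{U}$ is continuous.

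The main obstacle is the mismatch of domains: $\mathcal{U}(f_m)$ and $\mathcal{U}(f_0)$ a priori live on different sets, so their difference is meaningless until the transform $\mathcal{F}_m$ is introduced, and one must know that the transported forms converge in operator norm at a rate controlled by $\|f_m-f_0\|_{1,\infty}$ — this is the quantitative input borrowed from the proof of Proposition \ref{prop:1}. The only other delicate point is that the stability constant $M_0$ for $\tilde a_{f_0}$ does not depend on $m$, which is immediate once $k,b,\varepsilon$ are frozen since Theorem \ref{thm:1} provides one such constant for the surface $f_0$; the uniqueness half of Lemma \ref{u_welldefined} is then recovered for free, as bounded invertibility of $\tilde a_{f_0}$ together with $M_0\eta_m<1$ makes $\tilde a_{f_m}$ invertible as well, so that $\hat v_m$ (hence $\mathcal{U}(f_m)$) is well defined for all large $m$.
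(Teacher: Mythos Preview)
Your proposal is correct and follows essentially the same route as the paper: transport the problem to the fixed reference domain $D_{f_0}$ via the diffeomorphism $\mathcal{F}_m$, use the operator-norm estimate $\|\tilde a_{f_m}-\tilde a_{f_0}\|\le C\|f_m-f_0\|_{1,\infty}$ from the proof of Proposition~\ref{prop:1}, and then conclude by a perturbation argument for variational equations. The only difference is cosmetic: the paper cites Kato's general perturbation theory to obtain $\|\hat u_1-u_0\|_{H^1(D_{f_0})}\le C\|f_1-f_0\|_{1,\infty}$ in one line, whereas you spell out the Neumann-series estimate explicitly (and, arguably, more carefully handle the domain-mismatch and the uniform bound on $\|\hat v_m\|$).
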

\begin{proof}
	Let $f_0, f_1 \in \mathcal{C}$, with $\mathcal{U}(f_0)=u_0$ and $\mathcal{U}(f_1)=u_1$. Then for any $v\in H_{S,qp}^1(D)$,
	$$\tilde{a}_{f_j}(u_j,v)=\tilde{G}_{f_j}(v),\ j=0,1.$$
	
	Since one has
	$$\lvert \tilde{a}_{f_1}(u, \phi)-\tilde{a}_{f_0}(u, \phi)\rvert \leq C\|f_1-f_0\|_{1, \infty}\|u\|_{H^{1}\left(D_{f_0}\right)}\|\phi\|_{H^{1}\left(D_{f_0}\right)} \text { for all } u, \phi \in H_{S,qp}^1(D_{f_0})$$
	from the proof of Proposition \ref{prop:1},
	where
	both $\tilde{a}_{f_1}(u, \phi)$
	and $\tilde{a}_{f_0}(u, \phi)$ are sesquilinear forms,
	one can applies the general perturbation theory of variational equation \cite{Kato1976} which yields
	$$\|u_1\circ\mathcal{F} - u_0\|_{H^1(D_{f_0})} = \|\hat{u}_1-u_0\|_{H^1(D_{f_0})}\leq C \|f_m-f_0\|_{1,\infty},$$
	and thus, since $\mathcal{F}=\mathcal{I}$ on $D_{f_0}$,
	$$\|u_1-u_0\|_{H^1(K)} \leq C \|f_1-f_0\|_{1,\infty}.$$
	Let $u_d:=u_0-u_1$. It can be deduced that  $u_d\rightarrow 0$ in $H_{S,qp}^1(D_{f_0})$ as $f_1 \rightarrow f_0$ in $\mathcal{C}$. This ends the proof of this lemma.
\end{proof}

Now we are ready to prove Theorem \ref{thm:3}.

\begin{proof}
	Let $u = \mathcal{U} \circ c$ (which
	is well-defined by Lemma \ref{u_welldefined}). By construction, $a_{c(\omega)}(u(\omega), v)=G_{c(\omega)}(v)$ for all $v \in H_{S,qp}^1(D)$ almost surely.
	Since it follows from Assumption \ref{cond:f} and Lemma \ref{u_continuity} that $u$ is measurable, $u$ solves the variational problem.
	
	Under Assumption \ref{cond:f}, continuity of $\tilde{a}  $ and $\tilde{G}  $,
	regularity of $\tilde{a}  \circ c$ and  $\tilde{G}  \circ c$, and
	measurability  and
	$\mu$-essentially separability of $c$ hold by Proposition \ref{prop:1};
	stability a.s. and uniqueness a.s. hold by Proposition \ref{prop:2}.
	Moreover, since there is no trapping cases for the scattering problem by a Lipschitz perfectly conductor, the nontrapping condition required in   \cite{Pembery2020} is naturally satisfied.
	Therefore
	the maps $\mathcal{A}$ and $\mathcal{G}$ (defined by \eqref{def:AA} and \eqref{def:GG}) are well-defined;
	there exists $u\in L^2(\Omega;H^1_{S,qp}(D))$ being the solution to Problem \ref{P3}; the solution to Problem \ref{P3} is unique in $L^2(\Omega;H^1_{S,qp}(D))$.

	Combining the stability for the scattering problem by deterministic periodic structures in Theorem \ref{thm:1} with integrability and measurability properties on stochastic quantities verified by Proposition \ref{prop:1} and stability and uniqueness which hold almost surely given by Proposition \ref{prop:2} yields the well-posedness and the $k$-explicit stability for the scattering problem by random periodic structures, which completes the proof of Theorem \ref{thm:3}.
\end{proof}

\begin{remark}
	If the random structure is (uncertainty) quantified using the Karhunen-Lo\`eve (KL) expansion as in   \cite{BaoLinSINUM2020}, then the random structure can be represented by the following  Karhunen-Lo\`eve expansion
	\begin{equation*}
		\begin{aligned}
			f(\omega;x_1)&=\tilde{f}(x_1) + \displaystyle \sum_{j=0}^{\infty}\sqrt{\lambda_j} \xi_{j}(\omega)\varphi_{j}(x_1),
		\end{aligned}
	\end{equation*}
	where $\tilde{f}(x_1)$ is a $\Lambda$-periodic deterministic function, eigenvalues $\{ \lambda_j \}_{j=0}^{\infty}$ arranged in a descending order are corresponding to the orthonormal eigenfunctions $\{ \varphi_{j} \}_{j=0}^{\infty}$ of the covariance operator $C_f$
	and $\{\xi_j \}_{j=0}^{\infty}$ is a random variable with zero mean and unit covariance. The covariance function   \cite{bookRandomSurface, bookcovariance} takes the following form
	\begin{equation*}
		c(x_1-y_1)=\sigma^2 \exp(-\dfrac{\vert x_1-y_1\vert^2}{l^2}), 0<l\ll \Lambda,
	\end{equation*}
	where $\sigma$ is the root mean square of the surface and $l$ is the correlation length.
	It follows that the KL expansion of the random process $f(\omega;x_1)$ may be written as
	\begin{equation*}\label{eq:fKL}
		\begin{aligned}
			f(\omega;x_1)&=\tilde{f}(x_1)+\sqrt{\lambda_0}\xi_0(\omega) \sqrt{\dfrac{1}{\Lambda}} \\
			& + \displaystyle \sum_{j=1}^{\infty}\sqrt{\lambda_j}\left( \xi_{j,s}(\omega)\sqrt{\dfrac{2}{\Lambda}} \sin\left(\dfrac{2j\pi x_1}{\Lambda}\right)
			+ \xi_{j,c}(\omega)\sqrt{\dfrac{2}{\Lambda}} \cos\left(\dfrac{2j\pi x_1}{\Lambda}\right) \right),
		\end{aligned}
	\end{equation*}
	where $\xi_0$, $\xi_{j,s}$ and $\xi_{j,c}$ are independent and identically distributed random variables with zero mean and unit covariance.
	
	Since the eigenvalues $\{ \lambda_j \}_{j=0}^{\infty}$ decrease exponentially, it is easy to show that the derivative of the random surface $f(\omega;x_1)$ respect to $x_1$ is bounded and there exists a positive constant $C$ such that $\vert f(\omega;x_1)-f(\omega;x_2)\vert \leq C \vert x_1-x_2\vert$. Thus the random surface $f(\omega;x_1)$ in KL expansion is Lipschitz. Therefore, we have the results on the well-posedness of variational formulations of the Helmholtz equation on a random periodic structure proved in this paper, which verifies the well-posedness of the direct problem in   \cite{BaoLinSINUM2020}.
\end{remark}

\section{Conclusions}
\label{sec:conslusion}

In this work, we have established stability results for the Helmholtz equation on deterministic and random periodic structures, respectively.  Both estimates are explicit with respect to the wavenumber. To the authors' best knowledge, these are the first stability results explicit with respect to the wavenumber for the scattering problem in periodic structures.
An interesting future direction is to apply the stability results for conducting convergence analysis of numerical methods for solving the scattering problems. 
Our techniques may be applicable to other (stochastic) scattering problems. In particular, another future direction is to
conduct stability analysis for the electromagnetic scattering problems.

%
%
%

\bmhead{Acknowledgments}

This work was supported in part by National Natural Science
Foundation of China (11621101, U21A20425, 12071430, 12201404), a Key Laboratory of Zhejiang Province, and Postdoctoral Science Foundation of China (2021TQ0203).

\section*{Declarations}


\begin{itemize}
	\item[] \textbf{Conflict of interest} The authors declare that they have no conflict of interest.
	\item[] \textbf{Data availability} Data sharing is not applicable to this article as obviously no datasets were generated or analyzed during the current study.
	\item[] \textbf{Publisher's Note} Springer Nature remains neutral with regard to jurisdictional claims in
	published maps and institutional affiliations.
\end{itemize}






\end{document}